\documentclass[12pt]{amsart}
\usepackage{amssymb,latexsym}
\usepackage{pdfsync}
\usepackage{color}

\usepackage[hidelinks]{hyperref}
\usepackage{esint}

\newdimen\AAdi%
\newbox\AAbo%
\def\AAk#1#2{\s_etbox\AAbo=\hbox{#2}\AAdi=\wd\AAbo\kern#1\AAdi{}}%
\def\AAr#1#2#3{\s_etbox\AAbo=\hbox{#2}\AAdi=\ht\AAbo\raise#1\AAdi\hbox{#3}}%
\font\tenmsb=msbm10 at 12pt
\font\sevenmsb=msbm7 at 8pt
\font\fivemsb=msbm5 at 6pt
\newfam\msbfam
\textfont\msbfam=\tenmsb
\scriptfont\msbfam=\sevenmsb
\scriptscriptfont\msbfam=\fivemsb
\def\Bbb#1{{\tenmsb\fam\msbfam#1}}

\newcommand{\beq}{\begin{equation}}
\newcommand{\eeq}{\end{equation}}
\newcommand{\beqr}{\begin{eqnarray}}
\newcommand{\eeqr}{\end{eqnarray}}
\newcommand{\ba}{\begin{array}}
\newcommand{\ea}{\end{array}}

\begin{document}

\newtheorem{theorem}{Theorem}[section]
\newtheorem{lemma}{Lemma}[section]
\newtheorem{corollary}{Corollary}[section]
\newtheorem{remark}{Remark}
\newtheorem{proposition}{Proposition}[section]
\newtheorem{definition}{Definition}
\newtheorem{eg}{Example}
\newtheorem*{claim}{Claim}
\newcommand{\noi}{\noindent}
\newcommand{\dis}{\displaystyle}
\newcommand{\mint}{-\!\!\!\!\!\!\int}
\numberwithin{equation}{section}

\def \bx{\hspace{2.5mm}\rule{2.5mm}{2.5mm}}
\def \vs{\vspace*{0.2cm}}
\def\hs{\hspace*{0.6cm}}
\def \ds{\displaystyle}
\def \p{\partial}
\def \O{\Omega}
\def \o{\omega}
\def \b{\beta}
\def \m{\mu}
\def \l{\lambda}
\def\L{\Lambda}
\def \ul{u_\lambda}
\def \D{\Delta}
\def \d{\delta}
\def \k{\kappa}
\def \s{\sigma}
\def \e{\varepsilon}
\def \a{\alpha}
\def \sm{\sigma}
\def \tf{\tilde{f}}
\def\cqfd{%
\mbox{ }%
\nolinebreak%
\hfill%
\rule{2mm} {2mm}%
\medbreak%
\par%
}
\def \pr {\noindent {\it Proof.} }
\def \rmk {\noindent {\it Remark} }
\def \esp {\hspace{4mm}}
\def \dsp {\hspace{2mm}}
\def \ssp {\hspace{1mm}}

\def\la{\langle}\def\ra{\rangle}

\def \u{u_+^{p^*}}
\def \ui{(u_+)^{p^*+1}}
\def \ul{(u^k)_+^{p^*}}
\def \energy{\int_{\R^n}\u }
\def \sk{\s_k}
\def \mo{\mu_k}
\def\cal{\mathcal}
\def \I{{\cal I}}
\def \J{{\cal J}}
\def \K{{\cal K}}
\def \OM{\overline{M}}

\def\n{\nabla}
\newcommand{\Ric}{\operatorname{Ric}}
\newcommand{\divg}{\operatorname{div}}

\def\fk{{{\cal F}}_k}
\def\M1{{{\cal M}}_1}
\def\Fk{{\cal F}_k}
\def\Fl{{\cal F}_l}
\def\FF{\cal F}
\def\Gk{{\Gamma_k^+}}
\def\n{\nabla}
\def\uuu{{\n ^2 u+du\otimes du-\frac {|\n u|^2} 2 g_0+S_{g_0}}}
\def\uuug{{\n ^2 u+du\otimes du-\frac {|\n u|^2} 2 g+S_{g}}}
\def\sku{\sk\left(\uuu\right)}
\def\qed{\cqfd}
\def\vvv{{\frac{\n ^2 v} v -\frac {|\n v|^2} {2v^2} g_0+S_{g_0}}}
\def\vvs{{\frac{\n ^2 \tilde v} {\tilde v}
 -\frac {|\n \tilde v|^2} {2\tilde v^2} g_{S^n}+S_{g_{S^n}}}}
\def\skv{\sk\left(\vvv\right)}
\def\tr{\hbox{tr}}
\def\pO{\partial \Omega}
\def\dist{\hbox{dist}}
\def\RR{\Bbb R}\def\R{\Bbb R}
\def\C{\Bbb C}
\def\B{\Bbb B}
\def\N{\Bbb N}
\def\Q{\Bbb Q}
\def\Z{\Bbb Z}
\def\PP{\Bbb P}
\def\EE{\Bbb E}
\def\F{\Bbb F}
\def\G{\Bbb G}
\def\H{\Bbb H}
\def\SS{\Bbb S}\def\S{\Bbb S}

\def\div{\hbox{div}\,}

\def\lcf{{locally conformally flat} }

\def\circledwedge{\setbox0=\hbox{$\bigcirc$}\relax \mathbin {\hbox
to0pt{\raise.5pt\hbox to\wd0{\hfil $\wedge$\hfil}\hss}\box0 }}

\def\sss{\frac{\s_2}{\s_1}}

\date{\today}

\title[Hamilton Gradient Estimates for PME and FDE]{Hamilton-Type Gradient Estimates for Porous Medium and Fast Diffusion Equations on Riemannian Manifolds for all exponents}

\author{}

 \author[Jun Sun]{Jun Sun } 
\address{School of Mathematics and Statistics\\ Wuhan University\\Wuhan 430072,
China
 }
 \email{sunjun@whu.edu.cn}

\author[Jiaming Yang]{Jiaming Yang}
\address{School of Mathematics and Statistics\\ Wuhan University\\Wuhan 430072,
China
 }
 \email{2020302142030@whu.edu.cn}

\begin{abstract}
We establish local Hamilton-type gradient estimates for positive
$C^{2,1}$ solutions of $u_t=\Delta u^m$ on complete Riemannian manifolds
whose Ricci curvature is bounded from below. For every fixed $m>1$ and
every fixed $0<m<1$, there exist $\beta=\beta(m,n)>0$ and $C=C(m,n)>0$
such that a solution  $0<u\leqslant A$ in
$B_{2R}(x_0)\times(t_0-T,t_0]$ satisfies the following local gradient estimate
\begin{equation*}
\sup_{B_R(x_0)\times(t_0-T/2,t_0]}|\nabla u^\beta|
\leqslant C A^\beta
\left(\frac1R+\sqrt{k}+\frac{A^{(1-m)/2}}{\sqrt T}\right),
\end{equation*}
where $\mathrm{Ric}_M\geqslant-k$. The proof uses an intrinsic quantitative
alternative to locate, around each prescribed positive point, a cylinder
on which the solution has a controlled upper-to-lower ratio. A local
gradient estimate on this cylinder is combined with a stopping argument. As a
consequence, every uniformly bounded positive ancient solution on a
connected complete manifold with nonnegative Ricci curvature is constant.

\vskip12pt
\noindent{\it Keywords and phrases}: porous medium equation, fast diffusion equation,
Hamilton-type gradient estimate, Liouville theorem.

\noindent {\it MSC 2020}: 58J05, 35B45.
\end{abstract}
\maketitle
\section{Introduction}

\allowdisplaybreaks

In this paper we study local gradient estimates for the homogeneous nonlinear  equation
\begin{equation}\label{equ;1}
\frac{\partial u}{\partial t}=\Delta u^m
\quad\text{in}\quad
B_{2R}(x_0)\times(t_0-T,t_0],
\end{equation}
where $(M,g)$ is a complete Riemannian manifold of dimension $n\geqslant2$,
$B_{2R}(x_0)$ is a geodesic ball, and $T>0$. Throughout the paper,
$0<m<1$ or $m>1$. The equation is called the porous medium
equation (PME) when $m>1$ and the fast diffusion equation (FDE) when
$0<m<1$. In divergence form, its diffusion coefficient is $mu^{m-1}$.
Consequently, the equation degenerates near $u=0$ in the PME case and
becomes singular there in the FDE case. These different behaviors are
central to the regularity theory and to the construction of gradient
estimates; see, for example, \cite{LNVV,Liao}.

Gradient estimates connect local regularity with Harnack inequalities and
Liouville theorems. For the linear heat equation, the differential Harnack
estimates of Li and Yau \cite{LY} control a combination of spatial and
time derivatives. A complementary estimate of Hamilton
\cite{Hamilton} states that, on a closed Riemannian
manifold with $\mathrm{Ric}_M\geqslant-k$, a positive heat solution bounded
above by $A$ satisfies
\begin{equation*}
\frac{|\nabla u|^2}{u^2}
\leqslant\left(\frac1t+2k\right)\log\frac Au,
\qquad t>0.
\end{equation*}
In 2006, Souplet and Zhang \cite[Theorem~1.1]{SZ} established a localized
Hamilton-type estimate on complete noncompact manifolds. These results
provide two related, but distinct, models for nonlinear diffusion:
estimates involving a time derivative and estimates of a purely spatial
gradient.

For PME and FDE, the classical Aronson--B\'enilan inequality is an
important starting point. As recalled in \cite{LNVV},
for positive solutions on $\mathbb R^n$ it takes the form
\begin{equation*}
\Delta\left(\frac{m}{m-1}u^{m-1}\right)
\geqslant-\frac{\kappa}{t},
\qquad
\kappa=\frac{n}{n(m-1)+2},
\qquad m>1-\frac2n,\quad m\neq1.
\end{equation*}
Lu, Ni, V\'azquez and Villani \cite{LNVV} developed local
Aronson--B\'enilan and Li--Yau-type estimates on manifolds with a lower
Ricci curvature bound. Their Theorem~3.3 applies to every $m>1$, while
Theorem~4.1 treats $1-2/n<m<1$. A representative consequence of
Theorem~3.3 (1) is the following: if $\mathrm{Ric}_M\geqslant0$ on $B_R$,
$u$ is a positive smooth PME solution on $B_R\times[0,T]$, and
\begin{equation*}
v=\frac{m}{m-1}u^{m-1},\qquad
M_v=\sup_{B_R\times[0,T]}v<\infty,
\end{equation*}
then, for every $\alpha>1$,
\begin{equation*}
\frac{|\nabla v|^2}{v}-\alpha\frac{v_t}{v}
\leqslant C(n,m,\alpha)
\left(\frac1t+\frac{M_v}{R^2}\right)
\quad\text{on }B_{R/2}\times(0,T].
\end{equation*}
In particular, the full PME exponent range is already present in this
mixed space--time estimate. It does not by itself give an upper bound
for a purely spatial gradient, since the sign of $v_t$ is not prescribed.

Hamilton-type estimates for \eqref{equ;1} have also been obtained
by working directly with weighted pressure gradients. For FDE, Zhu
\cite{ZhuFDE} considered $1-\frac2n<m<1$ and the positive
inverse pressure
\begin{equation*}
\widetilde v=\frac{m}{1-m}u^{m-1}.
\end{equation*}
His local estimate controls $|\nabla\widetilde v|/\sqrt{\widetilde v}$
under an upper bound for $\widetilde v$. Since $m-1<0$, such a pressure
bound corresponds to a positive lower bound for $u$. For PME, Zhu
\cite{ZhuPME} obtained a local estimate for
$v^{(2-m)/(4(m-1))}|\nabla v|$ in the range
$1<m<1+\frac{1}{1+\sqrt{2n}}$. Huang, Xu and Zeng
\cite{HXZ} subsequently obtained an estimate for
$v^{1/(2(m-1))}|\nabla v|$ in the larger range
$1<m<1+\frac{1}{\sqrt{n-1}}$. Huang and Ma \cite{HM} established
further pressure-gradient estimates, including additional
 dimension-dependent FDE ranges. The quantities controlled in these
results differ, so their exponent intervals should not be compared
without also keeping track of the pressure weights and boundedness
assumptions.

A useful comparison in the FDE case is provided by Xu
\cite[Theorem~1.2]{Xu}. Let $u$ be a positive solution with $u\leqslant A$
on $B_R(x_0)\times(t_0-T,t_0]$ in a complete manifold satisfying
$\mathrm{Ric}_M\geqslant-k$. If
$1-4/(n+3)<m<1$, then
\begin{equation*}
\sup_{B_{R/2}(x_0)\times(t_0-T/2,t_0]}
\frac{|\nabla u|}{u}
\leqslant C(n,m)
\left(\frac1R+\sqrt{k}+\frac{A^{(1-m)/2}}{\sqrt T}\right).
\end{equation*}
This estimate requires no prescribed positive lower bound for $u$ and
controls the logarithmic gradient, but only in the stated exponent
range. Xu's Theorem~1.6 also gives a PME pressure-gradient estimate for
arbitrary $m>1$ under a quantitative pinching condition on the range of
$u$. Thus, control of the relative oscillation is already significant
in nonlinear gradient estimates.

More recently, Huang and Shen \cite{HS} used Moser iteration to obtain
local Li--Yau-type estimates for positive weak solutions. Their
Theorem~1.1 covers all $m>1$, and Theorem~1.8 covers
$1-2/n<m<1$. The constants in these local estimates depend on both the
maximum and the minimum of the relevant pressure on the working
cylinder. Their work demonstrates the usefulness of integral methods
for gradient estimates, while leaving a different issue from
the one addressed here: obtaining a purely spatial estimate, with no
quantitative lower bound for $u$, throughout both exponent ranges.

Our aim is to control $|\nabla u^\beta|$ for a suitable positive exponent
$\beta=\beta(m,n)$, for every fixed $m>1$ and every fixed $0<m<1$.
The estimate uses only the upper bound for $u$, the size of the
space--time cylinder, and the lower Ricci curvature bound. The exponent
$\beta$ is selected in the proof rather than prescribed as the pressure
exponent. This distinction is essential: extending the range of $m$
comes at the cost of estimating a sufficiently high positive power.
We do not claim that the resulting estimate improves the pressure or
logarithmic gradient bounds on the subranges where those stronger
quantities can already be controlled. Neither the optimal value of
$\beta$ nor uniformity of the constants as $m$ approaches an endpoint
is asserted.

The proof begins with a quantitative alternative on intrinsic cylinders
whose time length is proportional to $U^{1-m}r^2$. If the low-level set
has small space--time measure, a De Giorgi iteration produces a positive
lower bound on a smaller cylinder. Otherwise, comparison with the
solution of a uniformly parabolic linear equation, obtained from a
clipped coefficient, reduces the upper bound by a fixed factor. The
Sobolev inequality, local H\"older estimate and Dirichlet heat-kernel
lower bound needed here are supplied by Saloff-Coste
\cite[Theorem~3.1, Corollary~5.5 and Theorem~6.1]{S-C}. 

A closely related intrinsic De Giorgi strategy appears in Liao's work \cite{Liao}
on porous medium systems in $\mathbb{R^{N}}$ . His Theorem~1.1 establishes local H\"older regularity of bounded weak solutions for both $m>1$ and
$0<m<1$. In particular, Sections~3--5 distinguish between reduction
near zero and a regime in which the modulus of the solution is bounded
away from zero; Sections~5.1.2 and~5.2.2 then use normalized
nondegenerate equations. Our argument combines the alternative
with linear heat-kernel comparison and a stopping procedure centered
at each prescribed positive point.

More precisely, writing $u_*=u(x,t)>0$, the successive upper levels are
$U_j=\sigma^jA$ and the spatial scales are
$\rho_j=\sigma^{\beta j}\rho$. The process stops at a finite index $J$, with
$U_J/4\leqslant u\leqslant U_J$ on the relevant cylinder. Further
H\"older localization then yields a pressure-gradient estimate with
constants depending only on $m$ and $n$. The relation
\begin{equation*}
\rho_J\geqslant\rho\left(\frac{u_*}{A}\right)^\beta
\end{equation*}
compensates for the radius loss in the estimate of $\nabla u^\beta$.
For FDE, we use $(k^m-u^m)_+$ in the low-level energy estimate and place
the constant in the pressure weight below the local pressure values.

We use $C^{2,1}_{\mathrm{loc}}$ to express that the spatial derivatives through order two and the first time derivative are continuous on compact interior spatial
subcylinders, with their past-side traces at the upper time face. Our main results are the following local gradient estimates.

\begin{theorem}\label{thm;main}
Let $m>1$ and $n\geqslant 2$, and let $(M,g)$ be an $n$-dimensional complete
Riemannian manifold satisfying $\mathrm{Ric}_{M}\geqslant-k$ 
for some $k\geqslant0$.  Suppose that
\begin{equation*}
u\in C^{2,1}_{\mathrm{loc}}\bigl(B_{2R}(x_0)\times(t_0-T,t_0]\bigr)
\end{equation*}
is positive, satisfies (\ref{equ;1}) and
\begin{equation*}
0<u\leqslant A
\quad\text{in}\quad
B_{2R}(x_{0})\times(t_{0}-T,t_{0}].
\end{equation*}
Then there exist constants $\beta=\beta(m,n)>0$ and $C=C(m,n)>0$ such
that
\begin{equation}\label{equ;2}
\sup_{B_{R}(x_{0})\times(t_{0}-T/2,t_{0}]}
\left|\nabla u^{\beta}\right|
\leqslant
C(m,n)A^{\beta}
\left(
\frac{A^{-(m-1)/2}}{\sqrt{T}}
+\frac{1+\sqrt{k}R}{R}
\right).
\end{equation}
\end{theorem}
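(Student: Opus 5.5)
The plan follows the strategy outlined in the introduction: reduce to a pinched regime via a stopping argument, then apply a pinched gradient estimate.

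\textbf{Normalization.} By scaling I may assume $A=1$. Indeed, $\tilde u(x,t)=A^{-1}u(x,A^{m-1}t)$ again solves $\tilde u_t=\Delta\tilde u^m$, its time span is $T A^{1-m}$, and this accounts for the factor $A^{-(m-1)/2}/\sqrt T$ in (\ref{equ;2}).

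\textbf{Step 1: intrinsic alternative.} Fix a point $(x,t)\in B_R(x_0)\times(t_0-T/2,t_0]$ and set $u_*=u(x,t)$. Put $\rho=c\min\{R,1/\sqrt{k},\sqrt T\}$. Consider intrinsic cylinders $Q_r(U)=B_r(x)\times(t-\theta U^{1-m}r^2,t]$ on which $u\le U$. I would prove the following alternative. Either
\begin{itemize}
\item[(a)] the set $\{u\le U/4\}\cap Q_r(U)$ has measure at most $\nu|Q_r(U)|$, and then a De Giorgi iteration gives $u\ge U/4$ on $Q_{r/2}(U)$; or
\item[(b)] the sublevel set has large measure, and then $u\le\sigma U$ on a smaller cylinder $Q_{\sigma^\beta r}(\sigma U)$.
\end{itemize}
For (a), the De Giorgi iteration uses the energy inequality for $(U/4-u)_+$; in the FDE case I use $(k^m-u^m)_+$ instead. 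The iteration relies on the local Sobolev inequality of \cite{S-C}. Since $u\le U$ and $u\ge U/4$ on the relevant sets, the diffusion coefficient $mu^{m-1}$ is comparable to $U^{m-1}$ there, and this is what makes the intrinsic time scale natural. For (b), I write the equation as $u_t=\operatorname{div}(a\nabla u)$ with coefficient $a=mu^{m-1}$ clipped to $[mU^{m-1}4^{1-m},mU^{m-1}]$ (or the analogous interval for $m<1$). I then compare $U-u$ with a solution of a uniformly parabolic linear equation. Saloff-Coste's Dirichlet heat kernel lower bound, together with the measure information, shows that $U-u$ is bounded below by a fixed fraction of $U$, which yields the reduction $u\le\sigma U$. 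The exponent $\beta$ is chosen so that $\sigma^{2\beta}\sigma^{1-m}\le1$, which keeps the new cylinder inside the old one for both signs of $m-1$.

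\textbf{Step 2: stopping.} Iterate the alternative with $U_j=\sigma^j$ and $\rho_j=\sigma^{\beta j}\rho$, starting from $U_0=1$. Since $u_*>0$ and $u(x,t)\le U_j$ for every $j$, alternative (b) cannot occur indefinitely: once $U_j<u_*$ we get a contradiction. Hence the process stops at a finite index $J$ with $U_J\ge u_*$. At that index, $U_J/4\le u\le U_J$ on $Q_{\rho_J/2}(U_J)$, and moreover $\rho_J\ge\rho\,u_*^{\beta}$.

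\textbf{Step 3: pinched gradient estimate.} On this cylinder the equation is uniformly parabolic with ellipticity ratio depending only on $m$. Rescale by $u\mapsto u/U_J$, with space scaled by $\rho_J$ and time by $U_J^{1-m}\rho_J^2$. The rescaled problem has curvature $\ge -k\rho_J^2\ge -c$ and a solution pinched in $[1/4,1]$. A Hamilton/Souplet--Zhang type maximum principle argument for the pressure (or for $\log u$) then gives $|\nabla u|\le C U_J/\rho_J$ at $(x,t)$, with the H\"older estimate of \cite{S-C} supplying the localization. Consequently
\[
|\nabla u^\beta|=\beta u_*^{\beta-1}|\nabla u|\le C u_*^{\beta-1}\frac{U_J}{\rho\,u_*^\beta}\cdot\frac{\rho\, u_*^\beta}{\rho_J}\le \frac{C}{\rho},
\]
where I use $u_*\ge U_J/4$ and $\rho_J\ge\rho u_*^\beta$. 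Since $1/\rho\lesssim 1/R+\sqrt k+1/\sqrt T$, this is (\ref{equ;2}) for $A=1$.

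\textbf{Main obstacle.} I expect the hardest part to be Step 1(b), the reduction of the upper bound by a uniform factor. The clipped-coefficient comparison has to hold uniformly in $U$ and on manifolds, using only the Ricci lower bound on a scale $\rho\lesssim1/\sqrt k$. The constants must also be tracked carefully so that $\beta$ and $\sigma$ depend only on $(m,n)$. In particular, for $m<1$ the intrinsic time length grows as $U$ decreases, and the cylinders must stay inside $(t_0-T,t_0]$. This is where the requirement that $\beta$ be large enough enters.
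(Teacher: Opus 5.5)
Your architecture matches the paper's: an intrinsic De Giorgi/heat-kernel alternative, stopping with $U_j=\sigma^jA$ and $\rho_j=\sigma^{\beta j}\rho$, a H\"older-localized maximum principle for the pressure, and the closing bound $\rho_J\geqslant\rho(u_*/A)^\beta$. However, some steps fail as written.

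First, in the De Giorgi branch for $m>1$, the claim that $mu^{m-1}$ is comparable to $U^{m-1}$ ``on the relevant sets'' is false. The test functions $(k_j-u)_+$ live on $\{u<k_j\}$, where $u$ can be arbitrarily close to $0$. So the energy inequality only controls $\int u^{m-1}|\nabla(k_j-u)_+|^2\zeta_j^2$, and this degenerate weight does not give the unweighted gradient bound that the Sobolev inequality needs. The missing device is the truncation $h_j=(k_j-\max\{u,U/8\})_+$. It satisfies $u^{m-1}|\nabla(k_j-u)_+|^2\geqslant(U/8)^{m-1}|\nabla h_j|^2$, and still $h_j\geqslant k_j-k_{j+1}$ on $\{u<k_{j+1}\}$ because $k_{j+1}>U/4>U/8$. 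The measure hypothesis must also sit at a level strictly above the conclusion level: the paper assumes $\{u<U/2\}$ is small to get $u\geqslant U/4$. With $\{u\leqslant U/4\}$ small and conclusion $u\geqslant U/4$, the levels $k_j$ have no gap. Likewise, in branch (b), $U-u$ itself is not a supersolution of the clipped equation, since $\mathrm{div}\bigl((mu^{m-1}-d)\nabla(U-u)\bigr)$ has no sign where the coefficient is clipped. You must compare a truncation such as $z=\min\{U-u,U/2\}$, whose gradient vanishes exactly there.

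Second, the condition $\sigma^{2\beta+1-m}\leqslant1$ only keeps the intrinsic cylinders from growing; it does not let the iteration continue. The heat-kernel lower bound needs $\tau-s\geqslant\frac{\nu}{4}U^{1-m}r^2$, so the reduction $u\leqslant\sigma U$ holds only on $B_{r/2}(x)\times(t-\frac{\nu}{4}U^{1-m}r^2,t]$. The next application needs $u\leqslant U_{j+1}$ on $B_{2\rho_{j+1}}(x)\times(t-4U_{j+1}^{1-m}\rho_{j+1}^2,t]$. This forces $\sigma^\beta\leqslant\frac14$ and $16\sigma^{2\beta+1-m}\leqslant\nu$, which is why the paper takes $\beta\geqslant\frac{m-1}{2}+\log(4/\sqrt{\nu})/\log(1/\sigma)$.

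Your ``main obstacle'' remark is reversed. $U^{1-m}$ grows as $U\downarrow0$ precisely when $m>1$, the case of this theorem, and that growth is the source of the $(m-1)/2$ in $\beta$.

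Two smaller corrections. The normalization is $\tilde u(x,s)=A^{-1}u(x,A^{1-m}s)$ with time span $A^{m-1}T$; your formula gives $\tilde u_t=A^{2(m-1)}\Delta\tilde u^m$, so it does not solve the equation. In Step~3, the H\"older estimate is needed not merely for localization but to pinch the pressure to $(1\pm\frac{1}{8n})v_*$ on a smaller cylinder. With only the ratio-$4$ bounds on $u$, the good coefficient $2m(B-v)-\frac{(m-1)(n-1)(B-v)^2}{2v}$ in the maximum-principle computation for $F=|\nabla v|^2/(B-v)^2$ is not positive for large $m$.
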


We can also prove the corresponding theorem for the fast diffusion equation.
\begin{theorem}\label{thm;main2}
Let $0<m<1$ and $n\geqslant 2$, and let $(M,g)$ be an $n$-dimensional complete
Riemannian manifold satisfying $\mathrm{Ric}_{M}\geqslant-k$ 
for some $k\geqslant0$.  Suppose that
\begin{equation*}
u\in C^{2,1}_{\mathrm{loc}}\bigl(B_{2R}(x_0)\times(t_0-T,t_0]\bigr)
\end{equation*}
is positive, satisfies (\ref{equ;1}) and 
\begin{equation*}
0<u\leqslant A
\quad\text{in}\quad
B_{2R}(x_{0})\times(t_{0}-T,t_{0}].
\end{equation*}
Then there exist constants $\beta=\beta(m,n)>0$ and $C=C(m,n)>0$ such
that
\begin{equation}\label{equ;71}
\sup_{B_{R}(x_{0})\times(t_{0}-T/2,t_{0}]}
\left|\nabla u^{\beta}\right|
\leqslant
C(m,n)A^{\beta}
\left(
\frac{A^{-(m-1)/2}}{\sqrt{T}}
+\frac{1+\sqrt{k}R}{R}
\right).
\end{equation}
\end{theorem}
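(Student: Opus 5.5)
We prove Theorem~\ref{thm;main2} by the same intrinsic scheme outlined before Theorem~\ref{thm;main}, adapting each ingredient to the singular range $0<m<1$. By scaling we may normalize $A=1$: if $u$ solves \eqref{equ;1}, then $\tilde u(x,t)=A^{-1}u(x,A^{1-m}t)$ solves the same equation. Under this change the time length $T$ becomes $A^{m-1}T$, so the term $A^{(1-m)/2}/\sqrt T$ appears naturally. We then fix a point $(x,t)\in B_R(x_0)\times(t_0-T/2,t_0]$ and set $u_*=u(x,t)$. We also set $\rho=c\min\{R,1/\sqrt{k},\sqrt T\}$, chosen so that the intrinsic cylinders $Q_j=B_{\rho_j}(x)\times(t-\theta U_j^{1-m}\rho_j^2,t]$ lie inside $B_{2R}(x_0)\times(t_0-T,t_0]$. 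This requires $U_j^{1-m}\rho_j^2\leqslant\rho^2$, which holds because $U_j\leqslant 1$ and $1-m>0$. Here $U_j=\sigma^j$ and $\rho_j=\sigma^{\beta j}\rho$.

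\textbf{Step 1 (alternative).} On $Q_j$ with $u\leqslant U_j$ we distinguish two cases. If $|\{u<U_j/4\}\cap Q_j|\leqslant\nu|Q_j|$, a De Giorgi iteration gives $u\geqslant U_j/8$ on $\frac12Q_j$. For FDE this iteration uses the energy estimate with test function $(k^m-u^m)_+$, since $u^m$ is the natural variable there; it relies on Saloff-Coste's Sobolev inequality, which is valid at scales $\rho\lesssim1/\sqrt k$. Otherwise the low set is large. In that case we clip the coefficient $mu^{m-1}$ to the interval $[mU_j^{m-1},m(U_j/8)^{m-1}]$, compare with the solution of the resulting uniformly parabolic linear equation, and use the Dirichlet heat-kernel lower bound. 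This gives $\sup_{Q_{j+1}}u\leqslant\sigma U_j$ for a fixed $\sigma<1$, with $\beta$ chosen so that $Q_{j+1}\subset Q_j$ in intrinsic units. Because $U_j^{1-m}$ decreases in $j$ when $m<1$, the nesting condition relating $\rho_{j+1}$ to $\rho_j$ forces $\beta$ large in terms of $m$ and $n$. This is where $\beta(m,n)$ is determined.

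\textbf{Step 2 (stopping).} Since $u_*>0$ and $u(x,t)\leqslant U_j$ for every $j$, the reduction cannot continue forever. It stops at the first $J$ with $U_J\geqslant u_*$ for which the first alternative holds. The stopping rule then yields $U_J/8\leqslant u\leqslant U_J$ on $\frac12Q_J$, and $\rho_J\geqslant\rho\,u_*^\beta$ up to a constant.

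\textbf{Step 3 (gradient on the pinched cylinder).} On $\frac12Q_J$, rescaling by $U_J$ and by intrinsic time reduces the problem to a uniformly parabolic quasilinear equation with ratio bounded by $8$. For this equation we run a Bernstein/Hamilton argument with a cutoff for the pressure $\widetilde v$, plus a constant below its local range, in the style of Souplet--Zhang. Alternatively, we combine H\"older estimates with Schauder-type interior bounds. Either route gives $|\nabla u|\leqslant C U_J/\rho_J$ at $(x,t)$, using $\mathrm{Ric}\geqslant-k$ and $\rho\leqslant1/\sqrt k$.

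\textbf{Step 4 (conclusion).} We now have
\[
|\nabla u^\beta|(x,t)=\beta u_*^{\beta-1}|\nabla u|\leqslant C u_*^{\beta-1}U_J\rho^{-1}u_*^{-\beta}\leqslant C\rho^{-1},
\]
using $U_J\leqslant C u_*$. Undoing the normalization gives \eqref{equ;71}.

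The main obstacle is Step 1 in the FDE setting. The coefficient blows up near zero, so comparison must be done with a carefully clipped coefficient, and the intrinsic time scales shrink as the levels $U_j$ decrease. The geometric consistency of the nested cylinders, together with the De Giorgi constant remaining independent of $j$, has to be verified carefully. This is the point where I expect most of the technical work to lie.
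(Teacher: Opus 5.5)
\textbf{Gap in Step 3.} Your skeleton matches the paper's proof: normalization, the intrinsic alternative (De Giorgi with $(k^m-u^m)_+$ on the low set, clipped-coefficient heat-kernel comparison otherwise), stopping at the first index where the lower bound holds, and $\rho_J\geqslant\rho\,u_*^\beta$. The problem is Step 3. A two-sided bound $U_J/8\leqslant u\leqslant U_J$ is not enough for the Bernstein argument you describe, and the Schauder fallback is not available.

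Take $v=\frac{m}{1-m}u^{m-1}$, so $v_t=(1-m)v\Delta v-|\nabla v|^2$, and $F=|\nabla v|^2/(v-B)^2$. In a frame with $e_1=\nabla v/|\nabla v|$, the Bochner computation contains $2(1-m)F\sum_{i\geqslant2}v_{ii}$. These Hessian entries are unconstrained at a point. So this term can only be absorbed into $-\frac{2(1-m)v}{(v-B)^2}\sum_{i,j\geqslant2}v_{ij}^2$, at the cost of $\frac{(1-m)(n-1)(v-B)^2}{2v}F^2$. The coefficient of $-F^2$ is then
\[
2m(v-B)-\frac{(1-m)(n-1)(v-B)^2}{2v},
\]
which is positive only if $(v-B)/v<4m/((1-m)(n-1))$. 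With $B$ below the local range and $v_{\max}/v_{\min}$ as large as $8^{1-m}$, you only know that $(v-B)/v$ may reach $1-8^{m-1}$. This violates the condition for small $m$ or large $n$, which is exactly the dimension-dependent restriction the theorem is meant to remove.

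Schauder-type interior bounds need control of the metric in local coordinates. $\mathrm{Ric}\geqslant-k$ alone does not give this, so they cannot produce constants depending only on $m$ and $n$.

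\textbf{Missing idea.} You need the H\"older localization of Lemma~\ref{lemma;local-gradient2}, inserted before the Bernstein step.
\begin{enumerate}
\item After the time change $\theta=U^{m-1}(\tau-t)$, $u$ solves a linear divergence-form equation with coefficient in $[m,m4^{1-m}]$.
\item Saloff-Coste's local H\"older estimate then gives, on $B_{\eta r}(x)\times(t-\eta^2r^2/v_*,t]$ with $\eta=\eta(m,n)$ small, the pinching $(1-\varepsilon)v_*\leqslant v\leqslant(1+\varepsilon)v_*$, where $\varepsilon=\min\{1/8,\,m/(8(1-m)n)\}$.
\item Taking $B=(1-2\varepsilon)v_*$ gives $(v-B)/v\leqslant3\varepsilon/(1-\varepsilon)$. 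The coefficient above is then at least $m(v-B)\geqslant m\varepsilon v_*$.
\item The cutoff argument on this smaller cylinder yields $|\nabla u|(x,t)\leqslant Cu_*/r$, with the factor $\eta$ absorbed into $C(m,n)$.
\end{enumerate}
This step, not Step 1 (which goes through essentially as in the PME case), is where arbitrary $m\in(0,1)$ is really handled.

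\textbf{Minor correction.} For $m<1$, the decrease of $U_j^{1-m}$ actually helps the time nesting. $\beta$ is large only because alternative (ii) holds on a cylinder shortened by the factor $\nu$ while $\sigma$ is close to $1$. This is what $\sigma^{\beta-(m-1)/2}\leqslant\sqrt\nu/4$ encodes.
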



\begin{corollary}\label{cor;Liouville}
Let $(M,g)$ be a connected complete Riemannian manifold of dimension $n\geqslant2$
with $\mathrm{Ric}_M\geqslant0$, and let $m>1$ or $0<m<1$.
Suppose that $u\in C^{2,1}_{\mathrm{loc}}(M\times(-\infty,t_{0}])$ satisfies
\begin{equation*}
       u_t=\Delta u^m,\qquad 0<u(x,t)\leqslant A<\infty
 \quad\text{on }M\times(-\infty,t_{0}].
\end{equation*}
Then $u$ is constant in both space and time.
\end{corollary}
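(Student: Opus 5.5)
We derive Corollary~\ref{cor;Liouville} from Theorems~\ref{thm;main} and \ref{thm;main2} by sending the cylinder to infinity. Fix $m$ (either range) and let $\beta=\beta(m,n)>0$, $C=C(m,n)$ be the constants those theorems provide. Fix an arbitrary point $(x,t)\in M\times(-\infty,t_0]$ and an arbitrary base point $x_0\in M$. For $R>d(x,x_0)$ and any $T>2(t_0-t)$, the point $(x,t)$ lies in $B_R(x_0)\times(t_0-T/2,t_0]$. Since $u$ is defined and $C^{2,1}_{\mathrm{loc}}$ on all of $M\times(-\infty,t_0]$, its restriction to $B_{2R}(x_0)\times(t_0-T,t_0]$ is a positive $C^{2,1}_{\mathrm{loc}}$ solution bounded by $A$. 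Completeness of $M$ and $\mathrm{Ric}_M\geqslant 0$ allow us to take $k=0$. The theorem then gives
\begin{equation*}
|\nabla u^\beta|(x,t)\leqslant C A^\beta\left(\frac{A^{-(m-1)/2}}{\sqrt T}+\frac1R\right).
\end{equation*}
Letting $T\to\infty$ and then $R\to\infty$ shows $\nabla u^\beta(x,t)=0$. Note that $A^{-(m-1)/2}$ is a fixed finite number because $0<A<\infty$, so this term vanishes as $T\to\infty$ in both ranges of $m$.

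Because $x$ and $t$ were arbitrary, $\nabla u^\beta\equiv 0$. Since $u>0$, this gives $\nabla u=\beta^{-1}u^{1-\beta}\nabla u^\beta\equiv 0$ on $M\times(-\infty,t_0]$. Connectedness of $M$ then implies that for each $t$ the map $x\mapsto u(x,t)$ is constant, say equal to $c(t)$.

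For time independence, we use the equation. Since $u(\cdot,t)$ is spatially constant, $u^m(\cdot,t)$ is too, so $\Delta u^m=0$ in the interior. Hence $u_t=0$ there, and also at $t=t_0$ by the past-side trace convention in the definition of $C^{2,1}_{\mathrm{loc}}$. Thus $c'(t)=0$ on $(-\infty,t_0]$ and $c$ is constant. The function $c$ is differentiable because $u_t$ is continuous.

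The only point needing care is justifying that the theorems apply uniformly as $R,T\to\infty$. The constants depend only on $(m,n)$, not on $R$, $T$, $A$ or $x_0$, so no further obstacle arises. One might worry that the bound controls $\nabla u^\beta$ rather than $\nabla u$, but positivity of $u$ makes the conversion immediate.
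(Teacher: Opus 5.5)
Your proposal is correct and follows the route the paper indicates; the paper itself only remarks that the corollary follows from the local estimates. You take $k=0$ in Theorems~\ref{thm;main} and~\ref{thm;main2} and let $R,T\to\infty$, using that $\beta$ and $C$ depend only on $(m,n)$, to get $\nabla u^\beta\equiv0$, hence $\nabla u\equiv0$ by positivity; connectedness and $u_t=\Delta u^m=0$ then give independence of space and time.
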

\begin{remark}
Global boundedness in Corollary~1.1 can be replaced by a suitable
space--time growth condition. Let $m>0$, $m\neq1$, and let
$\beta=\beta(m,n)\geqslant m$ be the exponent selected in the proof. A sufficient condition is
\begin{equation*}
 u(x,t)=o\left(
 d(x,x_0)^{1/\beta}
 +(t_0-t)^{1/(2\beta-m+1)}
 \right)
\end{equation*}
uniformly as $d(x,x_0)+\sqrt{t_0-t}\to\infty$. The exponent $\beta$ is selected
sufficiently large in the proof and is not optimized. Consequently, a wider range of exponents does not by itself give sharper gradient estimates or weaker growth assumptions. We therefore do not claim that our results cover all previous gradient estimates or Liouville theorems.
\end{remark}

In Euclidean space, bounded ancient solutions of the corresponding porous
medium systems are already covered by Liao's Liouville theorem
\cite[Corollary~1.1]{Liao}. Corollary~\ref{cor;Liouville} concerns the
scalar equation on complete manifolds with nonnegative Ricci curvature
and follows from the local estimates above. 

The paper is organized as follows. Section~\ref{sec;alternative}
establishes the quantitative alternative for PME and FDE.
Section~\ref{sec;local-gradient} proves the local gradient estimates under
controlled two-sided bounds. Section~4 combines these ingredients with
the stopping argument to prove Theorems~\ref{thm;main}
and~\ref{thm;main2}.

\section{A Quantitative Alternative}\label{sec;alternative}

We first record the two analytic facts used in the iteration argument.  To
include the two-dimensional case, set
\begin{equation*}
n'=\begin{cases}
n, & n\geqslant3,\\
4, & n=2.
\end{cases}
\end{equation*}

\begin{lemma}[Saloff-Coste, \cite{S-C}]\label{lemma;Sobolev}
Let $(M,g)$ be a Riemannian manifold of dimension $n\geqslant2$ with
$\mathrm{Ric}_{M}\geqslant-k$ for some $k\geqslant0$.  There exists a
positive constant $C_{n'}$ depending only on $n'$ such that, for every
$\varphi\in C_{0}^{\infty}(B_{R})$, where $B_{R}\subset M$ is a geodesic
ball,
\begin{equation*}
e^{-C_{n'}(1+\sqrt{k}R)}|B_{R}|^{2/n'}R^{-2}
\left\|\varphi\right\|_{L^{\frac{2n'}{n'-2}}(B_{R})}^{2}
\leqslant
\int_{B_{R}}|\nabla\varphi|^{2}
+R^{-2}\int_{B_{R}}\varphi^{2}.
\end{equation*}
\end{lemma}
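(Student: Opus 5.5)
This is Saloff-Coste's local Sobolev inequality, which the paper takes from \cite{S-C}. My plan is to reduce it to two standard ingredients rather than reprove it from scratch: volume doubling with exponential loss, and a local scale-invariant Poincaré inequality on balls, both in the presence of $\mathrm{Ric}_M\geqslant-k$. Saloff-Coste's Theorem~3.1 states exactly that doubling plus Poincaré imply this local Sobolev inequality, with constants controlled by the doubling and Poincaré constants.

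\textbf{Doubling.} First I would apply Bishop--Gromov comparison. With the model space of curvature $-k/(n-1)$, for $0<r<s\leqslant R$ it gives
\begin{equation*}
\frac{|B_s(x)|}{|B_r(x)|}\leqslant \left(\frac sr\right)^{n} e^{C_n\sqrt{k}\,s}.
\end{equation*}
This is a doubling condition on $B_R$ with constant $D=2^n e^{C_n\sqrt k R}$.

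\textbf{Poincaré.} Second, Buser's inequality, or equivalently a Cheeger--Colding segment inequality, gives
\begin{equation*}
\int_{B_r}|\varphi-\varphi_{B_r}|^2\leqslant e^{C_n(1+\sqrt{k}r)}\,r^2\int_{B_{2r}}|\nabla\varphi|^2
\qquad\text{for } r\leqslant R.
\end{equation*}

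\textbf{Doubling and Poincaré to Sobolev.} The central step is the local Sobolev inequality
\begin{equation*}
\|\varphi\|_{2\nu/(\nu-2)}^2\leqslant \frac{C\,R^2}{|B_R|^{2/\nu}}\left(\|\nabla\varphi\|_2^2+R^{-2}\|\varphi\|_2^2\right)
\end{equation*}
for $\varphi$ supported in $B_R$. I would take $\nu=\max(n,3)$ and then enlarge it to $n'$. Enlarging the exponent is harmless: by Hölder on $B_R$, a Sobolev inequality at a dimension-type exponent $\nu$ implies one at any larger $\nu$, with the same normalization $|B_R|^{2/\nu}R^{-2}$ after tracking powers of $|B_R|$. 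This is why $n=2$ can be treated with $n'=4$, where $2n'/(n'-2)$ is finite. The route I would follow is standard.
\begin{itemize}
\item Use doubling and Poincaré to get an on-diagonal heat kernel (or Nash) bound $\|e^{t\Delta}\|_{1\to\infty}\leqslant C/|B_{\sqrt t}|$ for $t\leqslant R^2$.
\item Using doubling, bound $|B_{\sqrt t}|^{-1}\leqslant D\,(R/\sqrt t)^{\nu}|B_R|^{-1}$.
\item Convert this ultracontractivity into Sobolev via Varopoulos' theorem applied to $\Delta-R^{-2}$.
\end{itemize}

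The main obstacle is bookkeeping of the exponential constants. Each ingredient contributes a factor $e^{C(1+\sqrt kR)}$, and one must check that these only compound into a single factor of the same form with $C=C_{n'}$, never something like $e^{CkR^2}$. Since the Varopoulos step only raises constants to fixed powers depending on $\nu$, this works out.

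Since the statement is quoted verbatim from \cite{S-C}, I would in practice cite Theorem~3.1 there, after verifying the normalization. That normalization is the $R^{-2}$ weight on the $L^2$ term and the power $|B_R|^{2/n'}$.
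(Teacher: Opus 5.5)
Your proposal is correct and matches the paper, which gives no proof of this lemma and simply imports it from \cite[Theorem~3.1]{S-C}, the same citation you end with; your sketch (Bishop--Gromov doubling, Buser's Poincar\'e inequality, local Nash/ultracontractivity, Varopoulos, then H\"older to pass from $\nu$ to $n'$) is the standard mechanism behind that theorem. If you write it out, run the Varopoulos step on the Dirichlet semigroup of the ball, where the $e^{-t/R^{2}}$ factor from the shift by $R^{-2}$ gives ultracontractivity for all $t>0$, and state completeness of $M$ explicitly: Bishop--Gromov needs it, and the paper's statement omits it although its setting assumes it.
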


We shall also use the following elementary parabolic interpolation
consequence.

\begin{lemma}\label{lemma;parabolic-interpolation}
Let $(M,g)$ be a complete Riemannian manifold of dimension $n$, and let
\begin{equation*}
Q_{R,T}=B_{R}(x_{0})\times(t_{0}-T,t_{0}]
\subset M\times\mathbb{R}.
\end{equation*}
Suppose that $\varphi=\varphi(x,t)$ satisfies
\begin{equation}\label{equ;72}
X\sup_{t_{0}-\tau<t\leqslant t_{0}}
\int_{B_{\rho}}\varphi^{2}(x,t)
+Y\int_{t_{0}-\tau}^{t_{0}}
\left(
\int_{B_{\rho}}|\varphi|^{\frac{2n'}{n'-2}}
\right)^{\frac{n'-2}{n'}}\,dt
\leqslant S,
\end{equation}
where $B_{\rho}=B_{\rho}(x_{0})$ and $X,Y,S,\rho,\tau$ are constants
with $0<\rho\leqslant R$ and $0<\tau\leqslant T$.  Then
\begin{equation*}
\int_{Q_{\rho,\tau}}
|\varphi|^{\frac{2(n'+2)}{n'}}
\leqslant
\frac{S^{(n'+2)/n'}}{X^{2/n'}Y},
\end{equation*}
where
\begin{equation*}
Q_{\rho,\tau}
=B_{\rho}(x_{0})\times(t_{0}-\tau,t_{0}]
\subset Q_{R,T}.
\end{equation*}
Equivalently,
\begin{equation*}
\left\|\varphi\right\|_{L^{\frac{2(n'+2)}{n'}}(Q_{\rho,\tau})}^{2}
\leqslant
X^{-\frac{2}{n'+2}}Y^{-\frac{n'}{n'+2}}S.
\end{equation*}
\end{lemma}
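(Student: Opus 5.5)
We prove Lemma~\ref{lemma;parabolic-interpolation} by the standard H\"older interpolation between $L^\infty_tL^2_x$ and $L^2_tL^{2n'/(n'-2)}_x$, applied at each fixed time and then integrated in $t$. Write $q=\frac{2(n'+2)}{n'}$ and $2^*=\frac{2n'}{n'-2}$ (finite, since $n'\geqslant3$). Set
\begin{equation*}
E=\sup_{t_0-\tau<t\leqslant t_0}\int_{B_\rho}\varphi^2(x,t),
\qquad
F=\int_{t_0-\tau}^{t_0}\left(\int_{B_\rho}|\varphi|^{2^*}\right)^{\frac{n'-2}{n'}}dt .
\end{equation*}
The hypothesis \eqref{equ;72} gives $XE\leqslant S$ and $YF\leqslant S$, assuming $X,Y>0$ (otherwise the conclusion is vacuous or must be read with the convention $1/0=\infty$).

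\emph{Step 1: pointwise-in-time interpolation.} For fixed $t$, write $|\varphi|^{q}=|\varphi|^{4/n'}\cdot|\varphi|^{2}$. Apply H\"older on $B_\rho$ with exponents $\frac{n'}{2}$ and $\frac{n'}{n'-2}$:
\begin{equation*}
\int_{B_\rho}|\varphi|^{q}
\leqslant\left(\int_{B_\rho}\varphi^{2}\right)^{2/n'}
\left(\int_{B_\rho}|\varphi|^{2^*}\right)^{\frac{n'-2}{n'}} .
\end{equation*}
The exponents check out: $\frac{4}{n'}\cdot\frac{n'}{2}=2$ and $2\cdot\frac{n'}{n'-2}=2^*$.

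\emph{Step 2: integrate in time.} Bound the first factor by $E^{2/n'}$ and integrate over $(t_0-\tau,t_0]$. This gives
\begin{equation*}
\int_{Q_{\rho,\tau}}|\varphi|^{q}\leqslant E^{2/n'}F\leqslant \left(\frac{S}{X}\right)^{2/n'}\frac{S}{Y}=\frac{S^{(n'+2)/n'}}{X^{2/n'}Y}.
\end{equation*}

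\emph{Step 3: the equivalent form.} Raise the inequality from Step 2 to the power $2/q=\frac{n'}{n'+2}$. This yields $\|\varphi\|_{L^q(Q_{\rho,\tau})}^2\leqslant S\,X^{-2/(n'+2)}Y^{-n'/(n'+2)}$, as claimed.

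There is no real obstacle here. The only points needing care are the H\"older exponent bookkeeping and the use of $n'=4$ when $n=2$, which keeps $2^*$ finite. Measurability of $t\mapsto\int_{B_\rho}\varphi^2$ is implicit in the hypothesis.
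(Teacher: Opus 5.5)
Your proof is correct and follows the paper's: Hölder in space at each fixed time with exponents $n'/2$ and $n'/(n'-2)$, then bounding the $L^2$ factor by its time-supremum, integrating over $(t_0-\tau,t_0]$, and invoking the hypothesis. Your conversion to the equivalent norm form by raising to the power $n'/(n'+2)$ is also correct, a step the paper leaves implicit.
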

\begin{proof}
      We assume that $n\geqslant 3$. The following argument is also vaild as long as $n$ is replaced by $n'=4$ when $n=2$.
    
    For $t_{0}-\tau<t\leqslant t_{0}$, we use H\"{o}lder's inequality to deduce that
    \begin{equation*}
    \begin{aligned}
    \int_{B_{\rho }}|\varphi| ^{\frac{2(n+2)}{n}}&=\int_{B_{\rho }}|\varphi |^{2}\cdot |\varphi| ^{\frac{4}{n}}\leqslant \left( \int_{B_{\rho }}|\varphi| ^{\frac{2n}{n-2}} \right) ^{\frac{n-2}{n}}\left( \int_{B_{\rho }}\varphi ^{2} \right) ^{\frac{2}{n}}\\
    &\leqslant \sup_{t_{0}-\tau<t\leqslant t_{0}}\left( \int_{B_{\rho }}\varphi ^{2} \right) ^{\frac{2}{n}}\left( \int_{B_{\rho }}|\varphi| ^{\frac{2n}{n-2}} \right) ^{\frac{n-2}{n}}. 
    \end{aligned}
    \end{equation*}
    Integrating over $t_{0}-\tau <t\leqslant t_{0}$ and applying the inequality in the assumption (\ref{equ;72}) complete the proof.
\end{proof}

For completeness, we next state the discrete iteration lemma that will be
used below.

\begin{lemma}[Discrete iteration lemma]\label{lemma;discrete-iteration}
Let $\alpha>0$, $C>0$, and $B>1$.  Suppose that a nonnegative sequence
$\{y_{j}\}_{j\geqslant0}$ satisfies
\begin{equation}\label{equ;3}
y_{j+1}\leqslant CB^{j}y_{j}^{1+\alpha},
\qquad j=0,1,2,\ldots.
\end{equation}
Then
\begin{equation}\label{equ;4}
y_{j}\leqslant
y_{0}B^{-j/\alpha}
\left(C^{1/\alpha}B^{1/\alpha^{2}}y_{0}\right)^{(1+\alpha)^{j}-1}.
\end{equation}
In particular, if
\begin{equation*}
y_{0}<C^{-1/\alpha}B^{-1/\alpha^{2}},
\end{equation*}
then $y_{j}\to0$ as $j\to\infty$.
\end{lemma}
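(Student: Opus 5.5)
We prove \eqref{equ;4} by induction on $j$, after a change of variables that absorbs the geometric factor $B^{j}$. Set $z_j=B^{j/\alpha}y_j$, i.e.\ $y_j=B^{-j/\alpha}z_j$, so the claimed bound reads $z_j\leqslant y_0\,\theta^{(1+\alpha)^j-1}$ with
\begin{equation*}
\theta=C^{1/\alpha}B^{1/\alpha^{2}}y_{0}.
\end{equation*}

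Substituting into \eqref{equ;3} gives
\begin{equation*}
z_{j+1}=B^{(j+1)/\alpha}y_{j+1}\leqslant C B^{(j+1)/\alpha+j}B^{-j(1+\alpha)/\alpha}z_j^{1+\alpha}=C B^{1/\alpha}z_j^{1+\alpha}.
\end{equation*}
The $j$-dependence cancels exactly, which is why the factor $B^{-j/\alpha}$ appears in \eqref{equ;4}. Next put $w_j=(CB^{1/\alpha})^{1/\alpha}z_j$. Multiplying the recursion by $(CB^{1/\alpha})^{1/\alpha}$ yields
\begin{equation*}
w_{j+1}\leqslant (CB^{1/\alpha})^{1/\alpha}\,CB^{1/\alpha}\,z_j^{1+\alpha}=w_j^{1+\alpha}.
\end{equation*}
Since all quantities are nonnegative, an immediate induction gives $w_j\leqslant w_0^{(1+\alpha)^j}$. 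The base case $j=0$ is trivial, and the inductive step uses only monotonicity of $s\mapsto s^{1+\alpha}$ on $[0,\infty)$.

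Undoing the substitutions, note that $w_0=C^{1/\alpha}B^{1/\alpha^2}y_0=\theta$. Hence
\begin{equation*}
z_j=(CB^{1/\alpha})^{-1/\alpha}w_j\leqslant \theta^{-1}y_0\,\theta^{(1+\alpha)^j}=y_0\,\theta^{(1+\alpha)^j-1},
\end{equation*}
using $(CB^{1/\alpha})^{-1/\alpha}=y_0/\theta$. If $y_0=0$, then $\theta=0$ and every $y_j$ vanishes by \eqref{equ;3}, so \eqref{equ;4} holds trivially. Multiplying by $B^{-j/\alpha}$ gives \eqref{equ;4}.

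For the final claim, the hypothesis $y_0<C^{-1/\alpha}B^{-1/\alpha^2}$ is exactly $\theta<1$. Then $\theta^{(1+\alpha)^j-1}\leqslant 1$, and in fact $\theta^{(1+\alpha)^j-1}\to0$, while $B^{-j/\alpha}\to0$ because $B>1$. Therefore $y_j\to0$.

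The only delicate point is bookkeeping of exponents in the first substitution. Checking that $(j+1)/\alpha+j-j(1+\alpha)/\alpha=1/\alpha$ is the step to verify carefully; everything else is routine.
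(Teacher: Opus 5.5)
Your proof is correct and is essentially the paper's argument. Your two substitutions compose to $w_j=C^{1/\alpha}B^{j/\alpha+1/\alpha^{2}}y_j$, which is exactly the paper's single substitution; it turns the recursion into $w_{j+1}\leqslant w_j^{1+\alpha}$, and induction finishes (your separate treatment of $y_0=0$ is a harmless extra that the paper leaves implicit).
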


\begin{proof}
Set
\begin{equation*}
z_{j}=C^{1/\alpha}B^{j/\alpha+1/\alpha^{2}}y_{j}.
\end{equation*}
It follows from (\ref{equ;3}) that
\begin{equation*}
z_{j+1}\leqslant z_{j}^{1+\alpha}.
\end{equation*}
Consequently, $z_{j}\leqslant z_{0}^{(1+\alpha)^{j}}$, which is exactly
(\ref{equ;4}).
\end{proof}

We now establish the alternative needed to obtain a sufficiently small
cylinder on which the oscillation of $u$ is controlled.

\begin{lemma}[Alternative lemma]\label{lemma;alternative}
Let $m>1$ and $(M,g)$ be an $n$-dimensional complete
Riemannian manifold satisfying $\mathrm{Ric}_{M}\geqslant-k$. Suppose $u$ is a positive solution of (\ref{equ;1}), and  $(x,t)\in B_{R}(x_{0})\times(t_{0}-T/2,t_{0}]$. Then there exist constants
$\nu=\nu(m,n)$ and $\sigma=\sigma(m,n)$ satisfying
\begin{equation*}
0<\nu\leqslant\frac14,
\qquad
\frac34\leqslant\sigma<1,
\end{equation*}
with the following property.  Suppose that $\sqrt{k}r\leqslant1/8$ and
\begin{equation*}
0<u\leqslant U
\quad\text{in}\quad
B_{2r}(x)\times(t-4U^{1-m}r^{2},t].
\end{equation*}
Then one of the following two alternatives occurs:
\begin{align*}
\mathrm{(i)}\quad &u\geqslant\frac14U
&&\text{a.e. in }B_{r/2}(x)\times
\left(t-\frac14U^{1-m}r^{2},t\right],\\
\mathrm{(ii)}\quad &u\leqslant\sigma U
&&\text{a.e. in }B_{r/2}(x)\times
\left(t-\frac{\nu}{4}U^{1-m}r^{2},t\right].
\end{align*}
\end{lemma}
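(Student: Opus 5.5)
The plan is to rescale and then split according to the measure of the low-level set, following the description in the introduction. Set $\theta=U^{1-m}$ and $w=u/U$. Then $w_t=\theta^{-1}\Delta w^m$, so in the time variable $s=(t'-t)/\theta$ the function $w$ solves $w_s=\Delta w^m$ with $0<w\leqslant1$ on $B_{2r}(x)\times(-4r^2,0]$. Everything is now scale-free in $U$. Fix a small $\nu$ (to be chosen) and consider the intermediate cylinder $Q=B_{r}(x)\times(-\nu r^2,0]$ together with the low set $\{w\leqslant 1/2\}\cap Q$. The alternative is decided by whether
\begin{equation*}
|\{w\leqslant\tfrac12\}\cap Q|\leqslant \nu_0|Q|,
\end{equation*}
with $\nu_0=\nu_0(m,n)$ to be fixed by the De Giorgi step.

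\emph{Case 1 (small low set $\Rightarrow$ (i)).} Run a De Giorgi iteration from below on the levels $k_j=\tfrac14+\tfrac14 2^{-j}$, radii $r_j=\tfrac r2+\tfrac r2 2^{-j}$ and times $\tau_j$ shrinking toward $\tfrac14 r^2$. (With $\nu$ small, work instead on the cylinder $B_r\times(-r^2,0]$ in Case 1 and use its measure fraction there; the cases are separated by which cylinder is tested. I will arrange the bookkeeping so that Case 1 concerns $B_r\times(-r^2,0]$ and Case 2 the complement.) Test the equation with $(k_j-w)_+\zeta^2$. Since $w\geqslant$ nothing a priori for $m>1$, the diffusion coefficient $mw^{m-1}$ only satisfies $mw^{m-1}\geqslant m(1/4)^{m-1}$ on the set where the truncation is active and $w\geqslant 1/4$. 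This is the degeneracy issue, and I handle it by working with $(k_j^m-w^m)_+$, or equivalently by truncating $w$ at $1/4$ from below, which costs only constants depending on $m$. The resulting energy inequality is fed into Lemma~\ref{lemma;Sobolev}, using $\sqrt k r\leqslant 1/8$ to make $e^{-C_{n'}(1+\sqrt k r)}$ a dimensional constant. The volume factor $|B_r|^{2/n'}$ is normalized by taking measure fractions. Lemma~\ref{lemma;parabolic-interpolation} then gives
\begin{equation*}
Y_{j+1}\leqslant C 16^{j}Y_j^{1+2/(n'+2)}, \qquad Y_j=\frac{|\{w<k_j\}\cap Q_j|}{|Q_j|}.
\end{equation*}
Lemma~\ref{lemma;discrete-iteration} yields $Y_j\to0$ when $Y_0\leqslant\nu_0$, which gives $w\geqslant1/4$ on $B_{r/2}\times(-r^2/4,0]$. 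This is (i).

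\emph{Case 2 (large low set $\Rightarrow$ (ii)).} On at least a $\nu_0$-fraction of $B_r\times(-r^2,0]$ we have $w\leqslant 1/2$. By Fubini there is a time slice $s_1\in(-r^2,-\nu_0r^2/2)$, in the intended arrangement with $s_1\leqslant-\nu r^2$, at which $|\{w(\cdot,s_1)\leqslant1/2\}\cap B_r|\geqslant\tfrac{\nu_0}{2}|B_r|$. From $s_1$ on, compare with a linear problem. Write $1-w^m$ as a supersolution-type quantity. The function $v=1-w^m\geqslant0$ satisfies $v_s=a\Delta v$ with $a=mw^{m-1}\leqslant m$, and $a$ may degenerate. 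Clip the coefficient as $\tilde a=\max(a,m2^{1-m})$. Where $w\geqslant1/2$ we have $a=\tilde a$. On the part where $w<1/2$, $v$ is already $\geqslant1-2^{-m}$, and a min/comparison argument with the solution $h$ of $h_s=\tilde a\Delta h$ on $B_{2r}\times(s_1,0]$ should work. The data are $h=0$ on the lateral boundary and $h(\cdot,s_1)=(1-2^{-m})\chi_{\{w\leqslant1/2\}}$; $v$ dominates $\min(h,1-2^{-m})$. Saloff-Coste's Dirichlet heat-kernel lower bound for uniformly parabolic divergence/nondivergence operators (\cite[Theorem~6.1]{S-C}) on $B_{2r}$ over a time of order $r^2$, again with $\sqrt kr\leqslant1/8$, gives $h\geqslant c(\nu_0)\,$ on $B_{r/2}\times(-\nu r^2,0]$. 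Hence $w^m\leqslant 1-c$, so $w\leqslant\sigma:=\max\{(1-c)^{1/m},3/4\}<1$. Undoing the scaling gives (ii). The factor $\tfrac\nu4$ is absorbed by choosing $\nu\leqslant\nu_0/2$ small.

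The main obstacle is Case 2. The operator for $v$ is in nondivergence form $a\Delta v$ rather than divergence form, so I would instead write the equation for $w$ itself as $w_s=\operatorname{div}(mw^{m-1}\nabla w)$. With the clipped coefficient, this is a divergence form equation to which Saloff-Coste's kernel bounds apply. Justifying the comparison between $w$ and a solution of the clipped equation needs care, because the two equations agree only where $w\geqslant1/2$. I would do this by comparing on the open set $\{w>\lambda\}$ via a weak maximum principle for $(w-\ldots)_+$ tests.
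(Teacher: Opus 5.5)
Your plan follows the paper's proof. The dichotomy is tested on $B_r\times(t-U^{1-m}r^2,t]$. The small-measure case is a De Giorgi iteration from below, with levels decreasing from $U/2$ to $U/4$ and a lower truncation to cure the degeneracy. The paper cuts at $U/8$; your cut at $1/4$, or your test with $(k_j^m-w^m)_+$, works equally well since $k_j>1/4$, and any recursion exponent larger than $1$ suffices. The large-measure case uses a Fubini time slice, the clipped coefficient $m\max\{u,U/2\}^{m-1}$, and the Dirichlet heat-kernel lower bound \cite[Theorem~6.1]{S-C}.

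The step you leave open, the comparison in Case 2, is where the real content lies. Your first formulation via $1-w^m$ cannot be used as written: \cite{S-C} concerns divergence-form operators only, so it gives no kernel bound for $h_s=\tilde a\Delta h$.

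The paper avoids any free boundary by working with the concave truncation $z=\min\{U-u,U/2\}$. Concavity gives $z_t-\operatorname{div}(mu^{m-1}\nabla z)\geqslant0$. Since $\nabla z=0$ on $\{u<U/2\}$, one has $mu^{m-1}\nabla z=d\nabla z$ with the clipped $d$. So $z$ is a weak supersolution of the linear uniformly parabolic equation on the whole cylinder, with $z\geqslant0$ on the lateral boundary and $z(\cdot,s)\geqslant\frac U2\mathbf 1_{E_s}$, and the ordinary comparison principle applies.

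Your comparison on $\{w>\lambda\}$ is equivalent, but only with matched constants:
\begin{itemize}
\item Give $h$ the initial datum $\frac12\mathbf 1_{E_{s_1}}$, so that $0\leqslant h\leqslant\frac12$. The height must equal the clipping level. Carrying over $1-2^{-m}$ from your $v$-formulation would allow $(w-1+h)_+>0$ on $\{2^{-m}<w\leqslant\frac12\}$, already at time $s_1$, where $mw^{m-1}\neq\tilde a$.
\item Test the difference of the two divergence-form equations with $(w-1+h)_+$. This function vanishes where $w\leqslant\frac12$, so it only lives where $mw^{m-1}=\tilde a$. It also vanishes on the lateral boundary and at $s_1$.
\item The energy identity then yields $w\leqslant 1-h$, which is precisely the paper's inequality $z\geqslant h$.
\end{itemize}

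Finally, pose the Dirichlet problem on $B_{3r/2}$ rather than $B_{2r}$, as the paper does. This keeps both the comparison domain and the kernel estimate strictly inside the region where $0<u\leqslant U$ is assumed.
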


\begin{proof}
For simplicity, write $B_{r}=B_{r}(x)$ and set
\begin{equation*}
Q=B_{r}\times(t-U^{1-m}r^{2},t],
\qquad
E=Q\cap\{u<U/2\}.
\end{equation*}
We first consider the case $|E|\leqslant \nu |Q|$, that is, 
\begin{equation}\label{equ;5}
|E|\leqslant\nu U^{1-m}r^{2}|B_{r}|,
\end{equation}
where $\nu$ will be chosen below.

For $j=0,1,2,\ldots$, define
\begin{gather*}
k_{j}=\frac{U}{4}+\frac{U}{2^{j+2}},
\qquad
r_{j}=\frac{r}{2}+\frac{r}{2^{j+1}},
\qquad
f_{j}=(k_{j}-u)_{+},\\
Q_{j}=B_{r_{j}}\times(t-U^{1-m}r_{j}^{2},t],
\qquad
E_{j}=Q_{j}\cap\{u<k_{j}\},
\qquad
s_{j}=t-U^{1-m}r_{j}^{2}.
\end{gather*}
Choose a space-time cut-off function $\zeta_{j}$ such that
$\zeta_{j}\equiv1$ on $Q_{j+1}$,
$\operatorname{Supp}\zeta_{j}\subset Q_{j}$, and $0\leqslant\zeta_{j}\leqslant1$.
It may be chosen so that
\begin{equation}\label{equ;6}
|\nabla\zeta_{j}|\leqslant\frac{C2^{j}}{r_{j}},
\qquad
\left|\frac{\partial\zeta_{j}}{\partial t}\right|
\leqslant\frac{C4^{j}U^{m-1}}{r_{j}^{2}}.
\end{equation}

Write (\ref{equ;1}) in divergence form as $u_{t}=\operatorname{div}(mu^{m-1}\nabla u)$. Testing this equation with $f_{j}\zeta_{j}^{2}$ on
\begin{equation*}
Q_{j}^{\tau}=B_{r_{j}}\times(s_{j},\tau],
\qquad s_{j}\leqslant\tau\leqslant t,
\end{equation*}
we obtain 
\begin{align*}
&\frac12\int_{B_{r_{j}}}f_{j}^{2}\zeta_{j}^{2}(\cdot,\tau)
-\int_{Q_{j}^{\tau}}f_{j}^{2}\zeta_{j}(\zeta_{j})_{t}\\
&\quad=-m\int_{Q_{j}^{\tau}}u^{m-1}|\nabla f_{j}|^{2}\zeta_{j}^{2}
-2m\int_{Q_{j}^{\tau}}u^{m-1}f_{j}\zeta_{j}
\langle\nabla f_{j},\nabla\zeta_{j}\rangle.
\end{align*}
By the Cauchy-Schwarz  inequality,
\begin{align*}
&-2m\int_{Q_{j}^{\tau}}u^{m-1}f_{j}\zeta_{j}
\langle\nabla f_{j},\nabla\zeta_{j}\rangle\\
&\quad\leqslant
\frac{m}{2}\int_{Q_{j}^{\tau}}u^{m-1}|\nabla f_{j}|^{2}\zeta_{j}^{2}
+2m\int_{Q_{j}^{\tau}}u^{m-1}f_{j}^{2}|\nabla\zeta_{j}|^{2}.
\end{align*}
Using (\ref{equ;6}), $f_{j}\leqslant U/2$, and $u\leqslant U$, we obtain
\begin{equation}\label{equ;7}
\frac12\int_{B_{r_{j}}}f_{j}^{2}\zeta_{j}^{2}(\cdot,\tau)
+\frac{m}{2}\int_{Q_{j}^{\tau}}u^{m-1}|\nabla f_{j}|^{2}\zeta_{j}^{2}
\leqslant
\frac{C4^{j}U^{m+1}}{r^{2}}|E_{j}|.
\end{equation}
Define $h_{j}=\left(k_{j}-\max\{u,U/8\}\right)_{+}$.
Then $0\leqslant h_{j}\leqslant f_{j}$, $\nabla h_{j}=0$ on
$\{u\leqslant U/8\}$, and $|\nabla h_{j}|=|\nabla f_{j}|$ on
$\{u>U/8\}$.  Hence
\begin{equation*}
|\nabla h_{j}|^{2}
\leqslant\left(\frac{8u}{U}\right)^{m-1}|\nabla f_{j}|^{2},
\end{equation*}
which implies
\begin{equation*}
u^{m-1}|\nabla f_{j}|^{2}
\geqslant\left(\frac{U}{8}\right)^{m-1}|\nabla h_{j}|^{2}
\end{equation*}
almost everywhere. Since
\begin{equation*}
|\nabla(\zeta_{j}h_{j})|^{2}
\leqslant2\zeta_{j}^{2}|\nabla h_{j}|^{2}
+2h_{j}^{2}|\nabla\zeta_{j}|^{2},
\end{equation*}
(\ref{equ;7}) yields
\begin{equation}\label{equ;8}
\begin{aligned}
&\frac12\int_{B_{r_{j}}}h_{j}^{2}\zeta_{j}^{2}(\cdot,\tau)
+\frac{m}{4}\left(\frac{U}{8}\right)^{m-1}
\int_{Q_{j}^{\tau}}|\nabla(\zeta_{j}h_{j})|^{2}\\
&\quad\leqslant
\frac{m}{2}\left(\frac{U}{8}\right)^{m-1}
\int_{Q_{j}^{\tau}}h_{j}^{2}|\nabla\zeta_{j}|^{2}
+\frac{C4^{j}U^{m+1}}{r^{2}}|E_{j}|\\
&\quad\leqslant
\frac{C4^{j}U^{m+1}}{r^{2}}|E_{j}|.
\end{aligned}
\end{equation}

Applying Lemma \ref{lemma;Sobolev} to $\zeta_{j}h_{j}$ in $B_{r}$, integrating in
time, and then using (\ref{equ;8}), we obtain
\begin{align*}
&\frac12\int_{B_{r_{j}}}h_{j}^{2}\zeta_{j}^{2}(\cdot,\tau)
+e^{-C_{n'}(1+\sqrt{k}r)}|B_{r}|^{2/n'}r^{-2}
\frac{m}{4}\left(\frac{U}{8}\right)^{m-1}\int_{s_{j}}^{\tau}
\left\|\zeta_{j}h_{j}\right\|_{L^{\frac{2n'}{n'-2}}(B_{r_{j}})}^{2}
\,dt\\
&\quad\leqslant
\frac{m}{4}\left(\frac{U}{8}\right)^{m-1}r^{-2}
\int_{Q_{j}^{\tau}}\zeta_{j}^{2}h_{j}^{2}
+\frac{C4^{j}U^{m+1}}{r^{2}}|E_{j}|\\
&\quad\leqslant
\frac{C4^{j}U^{m+1}}{r^{2}}|E_{j}|.
\end{align*}
Take the supremum on the left hand side of the above inequality with respect to $\tau$. Then Lemma \ref{lemma;parabolic-interpolation} gives
\begin{align*}
\int_{Q_{j}}(\zeta_{j}h_{j})^{\frac{2(n'+2)}{n'}}
&\leqslant
e^{C_{n'}(1+\sqrt{k}r)}|B_{r}|^{-2/n'}r^{2}
\left(\frac{m}{4}\right)^{-1}
\left(\frac{U}{8}\right)^{1-m}\\
&\qquad\times
\left(\frac{C4^{j}U^{m+1}}{r^{2}}|E_{j}|\right)^{\frac{n'+2}{n'}}.
\end{align*}
On $E_{j+1}\subset Q_{j}$, we have $\zeta_{j}=1$ and
\begin{equation*}
h_{j}\geqslant k_{j}-k_{j+1}=\frac{U}{2^{j+3}}.
\end{equation*}
Using also $\sqrt{k}r\leqslant1/8$, we infer
\begin{equation*}
|E_{j+1}|
\leqslant
\frac{C(m,n)4^{\frac{n'+2}{n'}(2j+3)}U^{\frac{2(m-1)}{n'}}}
{|B_{r}|^{2/n'}r^{4/n'}}
|E_{j}|^{1+2/n'}.
\end{equation*}
Equivalently,
\begin{equation*}
\frac{U^{m-1}}{|B_{r}|r^{2}}|E_{j+1}|
\leqslant
C(m,n)4^{\frac{2(n'+2)}{n'}j}
\left(
\frac{U^{m-1}}{|B_{r}|r^{2}}|E_{j}|
\right)^{1+2/n'}.
\end{equation*}
Choose
\begin{equation}\label{equ;9}
\nu=\min\left\{
\frac14,
\frac12 C(m,n)^{-n'/2}2^{-n'(n'+2)}
\right\}.
\end{equation}
By Lemma \ref{lemma;discrete-iteration} and (\ref{equ;5}),
$|E_{j}|\to0$.  Since $k_{j}\to  U/4$, alternative (i) follows.

It remains to consider the case in which (\ref{equ;5}) fails, namely
\begin{equation}\label{equ;10}
|E|=\int_{t-U^{1-m}r^{2}}^{t}|E_{\tau}|\,\,d\tau
>\nu U^{1-m}r^{2}|B_{r}|,
\qquad
E_{\tau}=\{y\in B_{r}:u(y,\tau)<U/2\}.
\end{equation}
Split the time interval into
\begin{equation}\label{equ;11}
I_{1}=\left(t-U^{1-m}r^{2},
t-\frac{\nu}{2}U^{1-m}r^{2}\right],
\qquad
I_{2}=\left(t-\frac{\nu}{2}U^{1-m}r^{2},t\right].
\end{equation}
Since
\begin{equation*}
\int_{I_{2}}|E_{\tau}|\,\,d\tau
\leqslant\frac{\nu}{2}U^{1-m}r^{2}|B_{r}|,
\end{equation*}
there exists $s\in I_{1}$ such that
\begin{equation}\label{equ;12}
E_{s}=\{y\in B_{r}:u(y,s)<U/2\},
\qquad
|E_{s}|\geqslant\frac{\nu}{2}|B_{r}|.
\end{equation}

On $B_{2r}(x)\times(t-4U^{1-m}r^{2},t]$, define
\begin{equation}\label{equ;13}
z=\Phi(u)=\min\{U-u,U/2\}
=\begin{cases}
U/2, & u<U/2,\\
U-u, & u\geqslant U/2.
\end{cases}
\end{equation}
In the sense of distributions,
\begin{align*}
z_{t}-\operatorname{div}(mu^{m-1}\nabla z)
&=\Phi'(u)u_{t}
-\operatorname{div}\bigl(mu^{m-1}\Phi'(u)\nabla u\bigr)\\
&=\Phi'(u)u_{t}-\operatorname{div}\bigl(\Phi'(u)\nabla u^{m}\bigr)\\
&=-mu^{m-1}\Phi''(u)|\nabla u|^{2}\geqslant0.
\end{align*}
Since $\nabla z=0$ when $u<U/2$, we may write
\begin{equation*}
mu^{m-1}\nabla z=d(y,\tau)\nabla z,
\qquad
d(y,\tau)=m\max\{u(y,\tau),U/2\}^{m-1},
\end{equation*}
where
\begin{equation}\label{equ;14}
m2^{1-m}U^{m-1}\leqslant d(y,\tau)\leqslant mU^{m-1}.
\end{equation}

For the time $s$ selected in (\ref{equ;12}), consider the initial-boundary
value problem
\begin{equation}\label{equ;15}
\begin{cases}
w_{\tau}=\operatorname{div}(d(y,\tau)\nabla w)
&\text{in }B_{3r/2}(x)\times(s,t],\\
w=0
&\text{on }\partial B_{3r/2}(x)\times(s,t],\\
w(\cdot,s)=\dfrac{U}{2}\mathbf{1}_{E_{s}}
&\text{in }B_{3r/2}(x).
\end{cases}
\end{equation}
If $K(\tau,y;s,\xi)$ denotes the corresponding Dirichlet heat kernel, then by the minimality of the Dirichlet heat kernel (see \cite[Corollary 10.5]{Li} for example), $K$ is the minimal fundamental solution in $B_{3r/2}(x)\times(s,t]$ (see \cite[Section 6]{S-C}). The solution  of (\ref{equ;15}) is given by 
\begin{equation*}
w(y,\tau)=\frac{U}{2}\int_{E_{s}}K(\tau,y;s,\xi)\,\,d\mu(\xi).
\end{equation*}
Since $z$ is a supersolution and
$z(\cdot,s)\geqslant(U/2)\mathbf{1}_{E_{s}}$, the comparison principle
implies
\begin{equation}\label{equ;16}
z(y,\tau)\geqslant w(y,\tau)
=\frac{U}{2}\int_{E_{s}}K(\tau,y;s,\xi)\,\,d\mu(\xi).
\end{equation}

On the other hand, rescale the time variable by
\begin{equation*}
\theta=U^{m-1}(\tau-s),
\qquad
\widehat{w}(y,\theta)=w(y,s+U^{1-m}\theta),
\end{equation*}
and define
\begin{equation*}
\widehat{d}(y,\theta)
=U^{1-m}d(y,s+U^{1-m}\theta).
\end{equation*}
Then $m2^{1-m}\leqslant\widehat d\leqslant m$, and
\begin{equation}\label{equ;17}
\begin{cases}
\widehat w_{\theta}=\operatorname{div}(\widehat d(y,\theta)\nabla\widehat w)
&\text{in }B_{3r/2}(x)\times(0,U^{m-1}(t-s)],\\
\widehat w=0
&\text{on }\partial B_{3r/2}(x)\times(0,U^{m-1}(t-s)],\\
\widehat w(\cdot,0)=\dfrac{U}{2}\mathbf{1}_{E_{s}}
&\text{in }B_{3r/2}(x).
\end{cases}
\end{equation}
If $\widehat K (\theta ,y;0,\xi)$ is the minimal fundamental solution of the rescaled problem, then
\begin{equation}\label{equ;18}
K(\tau,y;s,\xi)
=\widehat K\bigl(U^{m-1}(\tau-s),y;0,\xi\bigr).
\end{equation}

Fix any $y\in B_{r/2}(x)$ and $\tau\in\left(t-\frac{\nu}{4}U^{1-m}r^{2},t\right]$.
Since $s\in I_{1}$,
\begin{equation*}
\frac{\nu}{4}U^{1-m}r^{2}
\leqslant\tau-s\leqslant U^{1-m}r^{2}.
\end{equation*}
Denote $h=U^{m-1}(\tau-s)$.  Then
$0<\nu r^{2}/4\leqslant h\leqslant r^{2}<(5r/4)^{2}$.  Taking
$\delta=4/5$, $B=B_{5r/4}(x)$, and $\Omega=B_{3r/2}(x)$ in
Theorem 6.1 of \cite{S-C}, we obtain
\begin{equation}\label{equ;19}
\widehat K(h,y;0,\xi)
\geqslant
\frac{e^{-C_{1}(1+kh)}}
{|B_{\sqrt h}(y)|^{1/2}|B_{\sqrt h}(\xi)|^{1/2}}
\exp\left(-\frac{C_{2}d^{2}(y,\xi)}{h}\right),
\end{equation}
where $C_{1}$ and $C_{2}$ depend only on $m$ and $n$.
For $\xi\in E_{s}\subset B_{r}(x)$, we have $d(y,\xi)\leqslant3r/2$,
$kh\leqslant kr^{2}\leqslant1/64$, and
$B_{\sqrt h}(y),B_{\sqrt h}(\xi)\subset B_{2r}(x)$.  Thus
\begin{equation*}
K(\tau,y;s,\xi)
\geqslant
\frac{\exp\left[-\left(65C_{1}/64+9C_{2}/\nu\right)\right]}
{|B_{2r}|}.
\end{equation*}
By volume comparison theorem and 
\begin{equation*}
\omega_{n}r^{n}\leqslant V_{k}(r)
\leqslant\omega_{n}r^{n}e^{\sqrt{(n-1)k}\,r},
\qquad r>0,\quad k\geqslant0,
\end{equation*}
where $\omega_{n}$ is the volume of the unit ball in $\mathbb{R}^{n}$ and $V_{k}(r)$ denotes the volume of a ball of radius $r$ in the space form of constant sectional curvature $-k/(n-1)$, we have
\begin{equation}\label{equ;20}
|B_{2r}|
\leqslant|B_{r}|\frac{V_{k}(2r)}{V_{k}(r)}
\leqslant2^{n}e^{2\sqrt{(n-1)k}\,r}|B_{r}|
\leqslant2^{n}e^{\sqrt{n-1}/4}|B_{r}|.
\end{equation}
Let
\begin{equation*}
C_{K}(m,n,\nu)
=\max\left\{
2^{n}\exp\left(
\frac{\sqrt{n-1}}{4}+\frac{65C_{1}}{64}+\frac{9C_{2}}{\nu}
\right),1
\right\}\geqslant 1.
\end{equation*}
It follows from (\ref{equ;18}), (\ref{equ;19}) and (\ref{equ;20}) that
\begin{equation*}
K(\tau,y;s,\xi)\geqslant\frac{1}{C_{K}|B_{r}|}.
\end{equation*}
Substituting this inequality into (\ref{equ;16}) and using (\ref{equ;12}), 
\begin{equation*}
z(y,\tau)
\geqslant\frac{U}{2}\frac{|E_{s}|}{C_{K}|B_{r}|}
\geqslant\frac{\nu U}{4C_{K}}.
\end{equation*}
Since $z\leqslant U-u$, we conclude that
\begin{equation}\label{equ;21}
u\leqslant\left(1-\frac{\nu}{4C_{K}}\right)U
\quad\text{in}\quad
B_{r/2}(x)\times
\left(t-\frac{\nu}{4}U^{1-m}r^{2},t\right].
\end{equation}
Taking $\sigma=1-\nu/(4C_{K})$ proves alternative (ii).
\end{proof}

For the FDE case, we prove the following result:
\begin{lemma}[Alternative lemma]\label{lemma;alternative2}
Let $0<m<1$ and $(M,g)$ be an $n$-dimensional complete
Riemannian manifold satisfying $\mathrm{Ric}_{M}\geqslant-k$. Suppose $u$ is a positive solution of \eqref{equ;1}, and $(x,t)\in B_{R}(x_{0})\times(t_{0}-T/2,t_{0}]$. Then there exist constants $\nu=\nu(m,n)$ and $\sigma=\sigma(m,n)$ satisfying
\begin{equation*}
 0<\nu\leqslant\frac14,\qquad\frac34\leqslant\sigma<1,
\end{equation*}
with the following property. Suppose that $\sqrt{k}r\leqslant1/8$ and
\begin{equation*}
 0<u\leqslant U\quad\text{in }
 B_{2r}(x)\times(t-4U^{1-m}r^2,t].
\end{equation*}
Then at least one of the following alternatives holds:
\begin{align*}
 \textup{(i)}\quad&u\geqslant\frac U4
 &&\text{a.e. in }\ B_{r/2}(x)\times\left(t-\frac{1}{4}U^{1-m}r^2,t\right],\\
 \textup{(ii)}\quad&u\leqslant\sigma U
 &&\text{a.e. in }\ B_{r/2}(x)\times\left(t-\frac{1}{4}\nu U^{1-m}r^2,t\right].
\end{align*}
\end{lemma}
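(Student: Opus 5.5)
I would prove Lemma~\ref{lemma;alternative2} by the same scheme as Lemma~\ref{lemma;alternative}. The only change is the low-level energy estimate, where $u^{m-1}$ is now \emph{large} near $u=0$ rather than small. As before, set $Q=B_r\times(t-U^{1-m}r^2,t]$ and $E=Q\cap\{u<U/2\}$, and split according to whether $|E|\leqslant\nu|Q|$.

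\emph{First case: $|E|\leqslant\nu|Q|$.} I keep the same $k_j,r_j,Q_j,E_j,\zeta_j$ and the cut-off bounds \eqref{equ;6}. Following the remark in the introduction, I test the equation with $(k_j^m-u^m)_+\zeta_j^2$ instead of $(k_j-u)_+\zeta_j^2$.
\begin{itemize}
\item The time term produces $\frac{d}{dt}\int G_j(u)\zeta_j^2$, where $G_j(u)=\int_u^{k_j}(k_j^m-v^m)\,dv$. Since $k_j\sim U$ and $u\leqslant k_j$, one has $G_j(u)\simeq U^{m-1}(k_j-u)_+^2$ up to constants depending on $m$. This uses $k^m-v^m\geqslant m k^{m-1}(k-v)$, valid for $m<1$ and $v\leqslant k$, together with $k^m-v^m\leqslant k^{m-1}(k-v)\cdot C$ once $v\geqslant$ some fraction of $k$. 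Where $v$ is tiny, the bound $k^m-v^m\leqslant k^m$ still gives $G_j\leqslant CU^{m+1}$, which is all the right-hand side needs.
\item The diffusion term gives $\int|\nabla(k_j^m-u^m)_+|^2\zeta_j^2$ directly. This is the point of the substitution: there is no degenerate weight, and the energy controls $g_j=(k_j^m-u^m)_+$ itself.
\item The error terms are bounded by $C4^jU^{2m}r^{-2}|E_j|$ plus $\int G_j|\partial_t\zeta_j|\leqslant C4^jU^{m+1}U^{m-1}r^{-2}|E_j|$.
\end{itemize}
Dividing by $U^{m-1}$ gives an energy inequality for $g_j$, whose sup-in-time part is $\sup\int U^{1-m}g_j^2$ (using $g_j\lesssim U^{m-1}(k_j-u)_+$) and whose gradient part is $\int|\nabla(g_j\zeta_j)|^2$. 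I then apply Lemma~\ref{lemma;Sobolev} and Lemma~\ref{lemma;parabolic-interpolation} with $X\sim U^{1-m}$ and $Y\sim|B_r|^{2/n'}r^{-2}e^{-C}$. On $E_{j+1}$ we have $g_j\geqslant k_j^m-k_{j+1}^m\geqslant m U^{m-1}U2^{-j-3}$. This yields the same recursion for $U^{m-1}|E_j|/(|B_r|r^2)$ as in the PME case, with constants depending only on $m,n$. Choosing $\nu$ as in \eqref{equ;9} and invoking Lemma~\ref{lemma;discrete-iteration} gives (i).

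I expect this bookkeeping to be the main obstacle. The powers of $U$ must cancel exactly so that the recursion is scale invariant. The check is that the intrinsic time scale $U^{1-m}r^2$ makes $X^{2/n'}$ and the $U$-powers coming from $(U^{m}2^{-j})^{2(n'+2)/n'}$ combine into $(U^{m-1})^{2/n'}$. I would verify this exponent count first.

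\emph{Second case: $|E|>\nu|Q|$.} This is essentially verbatim from Lemma~\ref{lemma;alternative}. I pick $s\in I_1$ with $|E_s|\geqslant\nu|B_r|/2$, and set $z=\min\{U-u,U/2\}$. As before, $z$ is a supersolution with $mu^{m-1}\nabla z=d\nabla z$, where now $d=m\max\{u,U/2\}^{m-1}$ satisfies $mU^{m-1}\leqslant d\leqslant m2^{1-m}U^{m-1}$. The bounds are reversed, but the clipping still yields a uniformly parabolic coefficient comparable to $U^{m-1}$. I rescale time by $\theta=U^{m-1}(\tau-s)$, apply the Dirichlet heat-kernel lower bound of \cite[Theorem~6.1]{S-C} on $B_{3r/2}$, and use the volume comparison \eqref{equ;20}. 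This gives $z\geqslant\nu U/(4C_K)$ on $B_{r/2}\times(t-\frac{\nu}{4}U^{1-m}r^2,t]$, so (ii) holds with $\sigma=1-\nu/(4C_K)$. Possibly I would shrink $\nu$ to ensure $\sigma\geqslant3/4$, which is automatic since $C_K\geqslant1$ and $\nu\leqslant1/4$.
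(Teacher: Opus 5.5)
Your proposal is correct and follows the paper's proof. In the first case you use the same test function $(k_j^m-u^m)_+\zeta_j^2$, the same primitive $H_{k_j}=G_j$ with its two-sided comparison to $k_j^{m-1}(k_j-u)_+^2$, and the same scale-invariant recursion for $U^{m-1}|E_j|/(r^2|B_r|)$; the second case is the paper's clipped-coefficient heat-kernel comparison verbatim, with the reversed bounds $mU^{m-1}\leqslant d\leqslant m2^{1-m}U^{m-1}$. The only cosmetic difference is that you run the De Giorgi step on $g_j=(k_j^m-u^m)_+$ directly, with $X\sim U^{1-m}$ and no weight on the gradient (so your phrase ``dividing by $U^{m-1}$'' is superfluous), whereas the paper converts back to $f_j=(k_j-u)_+$ via $|\nabla f_j|\leqslant\frac{U^{1-m}}{m}|\nabla z_j|$.
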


\begin{proof}
Write $B_r=B_r(x)$ and set
\begin{equation*}
 Q=B_r\times(t-U^{1-m}r^2,t],\qquad E=Q\cap\{u<U/2\}.
\end{equation*}
First suppose that
\begin{equation}\label{equ;35}
 |E|\leqslant\nu U^{1-m}r^2|B_r|.
\end{equation}
As in the proof of Lemma \ref{lemma;alternative}, define
\begin{gather*}
 k_j=\frac U4+\frac U{2^{j+2}},\qquad
 r_j=\frac r2+\frac r{2^{j+1}},\qquad f_j=(k_j-u)_+,\\
 s_j=t-U^{1-m}r_j^2,\qquad
 Q_j=B_{r_j}\times(s_j,t],\qquad E_j=Q_j\cap\{u<k_j\}.
 \end{gather*}
Choose $0\leqslant\zeta_j\leqslant1$, equal to one on $Q_{j+1}$, vanishing on the spatial boundary and initial time of $Q_j$, with
\begin{equation}\label{equ;36}
 |\nabla\zeta_j|\leqslant\frac{C2^j}{r},\qquad
 |(\zeta_j)_t|\leqslant\frac{C4^jU^{m-1}}{r^2}.
\end{equation}
Define
\begin{equation}\label{equ;37}
 z_j=(k_j^m-u^m)_+,\qquad
 H_k(u)=\begin{cases}
 \displaystyle\int_u^k(k^m-s^m)\,ds,&u<k,\\[2pt]
 0,&u\geqslant k.
 \end{cases}
\end{equation}
Testing $u_t=\Delta u^m$ with $-z_j\zeta_j^2$, and writing $Q_j^\tau=B_{r_j}\times(s_j,\tau]$, we obtain 
\begin{equation}\label{equ;38}
\begin{aligned}
 &\int_{B_{r_j}}H_{k_j}(u)\zeta_j^2(\cdot,\tau)\,d\mu
   +\int_{Q_j^\tau}|\nabla z_j|^2\zeta_j^2\,d\mu\,ds\\
 &\quad=2\int_{Q_j^\tau}H_{k_j}(u)\zeta_j(\zeta_j)_t\,d\mu\,ds
       -2\int_{Q_j^\tau}z_j\zeta_j
           \langle\nabla z_j,\nabla\zeta_j\rangle\,d\mu\,ds.
\end{aligned}
\end{equation}
Indeed, $H_k'(u)=-(k^m-u^m)_+$ and $\nabla z_j=-\nabla u^m$ on $\{u<k_j\}$.  For $0\leqslant s\leqslant k$ and $0<m<1$,
\begin{equation*}
 mk^{m-1}(k-s)\leqslant k^m-s^m\leqslant k^{m-1}(k-s).
\end{equation*}
The lower inequality follows from concavity, and the upper one from $(s/k)^m\geqslant s/k$. Integration yields
\begin{equation}\label{equ;39}
 \frac m2k^{m-1}(k-u)_+^2
 \leqslant H_k(u)\leqslant\frac12k^{m-1}(k-u)_+^2.
\end{equation}
Moreover,
\begin{equation*}
 H_{k_j}(u)\leqslant C_mU^{m+1}\mathbf1_{E_j},\qquad
 z_j^2\leqslant U^{2m}\mathbf1_{E_j}.
\end{equation*}
Using Young's inequality in \eqref{equ;38}, applying \eqref{equ;36}, and taking the supremum with respect to $\tau$, we obtain
\begin{equation}\label{equ;40}
\begin{aligned}
 &\sup_{s_j<\tau\leqslant t}\int_{B_{r_j}}H_{k_j}(u)\zeta_j^2(\cdot,\tau)\,d\mu
   +\frac12\int_{Q_j}|\nabla z_j|^2\zeta_j^2\,d\mu\,ds\\
 &\hspace{35mm}\leqslant \frac{C(m,n)4^jU^{2m}}{r^2}|E_j|.
\end{aligned}
\end{equation}
Since $k_j\leqslant U$ and $m-1<0$, on the non-zero set
\begin{equation*}
 H_{k_j}(u)\geqslant\frac m2U^{m-1}f_j^2,\qquad
 |\nabla f_j|=\frac{u^{1-m}}m|\nabla z_j|
 \leqslant\frac{U^{1-m}}m|\nabla z_j|.
\end{equation*}
Multiplying \eqref{equ;40} by $U^{1-m}/m$ and using
 $|\nabla(\zeta_jf_j)|^2 \leqslant2\zeta_j^2|\nabla f_j|^2+2f_j^2|\nabla\zeta_j|^2$, 
\begin{equation}\label{equ;41}
\begin{aligned}
 &\frac12\sup_{s_j<\tau\leqslant t}\int_{B_{r_j}}(\zeta_jf_j)^2(\cdot,\tau)\,d\mu+\frac m4U^{m-1}\int_{Q_j}
      |\nabla(\zeta_jf_j)|^2\,d\mu\,ds
 \leqslant\frac{C(m,n)4^jU^{m+1}}{r^2}|E_j|.
\end{aligned}
\end{equation}
At this point the Sobolev and discrete iteration steps are the same as in the proof of Lemma \ref{lemma;alternative}, with $f_j$ in place of $h_j$. For clarity, applying Lemmas~\ref{lemma;Sobolev}--\ref{lemma;parabolic-interpolation} to \eqref{equ;41}, with
\begin{equation*}
 X=\frac12,\qquad
 Y=\frac m4U^{m-1}e^{-C_{n'}(1+\sqrt{k}r)}|B_r|^{2/n'}r^{-2},
\end{equation*}
gives
\begin{equation}\label{equ;42}
\begin{aligned}
 \int_{Q_j}|\zeta_jf_j|^{2(n'+2)/n'}\,d\mu\,ds
 &\leqslant C(m,n)e^{C_{n'}(1+\sqrt{k}r)}|B_r|^{-2/n'}r^2U^{1-m}\\[-2pt]
 &\qquad\times
 \left(\frac{4^jU^{m+1}}{r^2}|E_j|\right)^{(n'+2)/n'}.
\end{aligned}
\end{equation}
On $E_{j+1}$, $\zeta_j=1$ and $f_j\geqslant k_j-k_{j+1}=U2^{-j-3}$. Since $\sqrt{k}r\leqslant1/8$, it follows that
\begin{equation}\label{equ;43}
 |E_{j+1}|\leqslant C_0\,2^{4j(1+2/n')}
    \bigl(U^{1-m}r^2|B_r|\bigr)^{-2/n'}|E_j|^{1+2/n'},
\end{equation}
where $C_0=C_0(m,n)\geqslant1$. Notice that the remaining power of $U$ is $2(m-1)/n'$, which has the opposite sign from the PME case but is removed by the same normalization.

Set
\begin{equation}\label{equ;44}
 \nu=\min\left\{\frac14,\frac12C_0^{-n'/2}2^{-n'(n'+2)}\right\},
 \qquad y_j=\frac{|E_j|}{U^{1-m}r^2|B_r|}.
\end{equation}
Then $y_0\leqslant\nu$ and \eqref{equ;43} is \eqref{equ;3} with $\alpha=2/n'$ and $B=2^{4(1+2/n')}$. Lemma~\ref{lemma;discrete-iteration} yields $|E_j|\to0$. The set $\{u<U/4\}$ in the cylinder of alternative (i) is contained in every $E_j$, so it has measure zero. This implies (i).

Suppose now that \eqref{equ;35} fails. The same time splitting and Fubini argument as in the  proof of Lemma \ref{lemma;alternative} produce
\begin{equation}\label{equ;45}
\begin{aligned}
 &s\in(t-U^{1-m}r^2,t-\nu U^{1-m}r^2/2),\\
 &E_s=\{y\in B_r:u(y,s)<U/2\},\qquad |E_s|\geqslant\frac\nu2|B_r|.
\end{aligned}
\end{equation}
Retain the truncation and the clipped coefficient
\begin{equation}\label{equ;46}
 z=\min\{U-u,U/2\},\qquad
 d(y,\tau)=m\max\{u(y,\tau),U/2\}^{m-1}.
\end{equation}
Then
\begin{equation*}
 z_\tau-\operatorname{div}(mu^{m-1}\nabla z)\geqslant0.
\end{equation*}
The coefficient bounds are now
\begin{equation}\label{equ;47}
 mU^{m-1}\leqslant d(y,\tau)\leqslant m2^{1-m}U^{m-1}.
\end{equation}

Let $w$ solve
\begin{equation}\label{equ;48}
\begin{cases}
w_{\tau}=\operatorname{div}(d(y,\tau)\nabla w)
&\text{in }B_{3r/2}(x)\times(s,t],\\
w=0
&\text{on }\partial B_{3r/2}(x)\times(s,t],\\
w(\cdot,s)=\dfrac{U}{2}\mathbf{1}_{E_{s}}
&\text{in }B_{3r/2}(x).
\end{cases}
\end{equation}
Follow the remaining steps in the proof of Lemma \ref{lemma;alternative} and thus alternative (ii) follows with $\sigma=\sigma(m,n)\in [3/4,1)$.
\end{proof}


\section{Local Gradient Estimates Under Two-Sided Bounds}\label{sec;local-gradient}

\begin{lemma}\label{lemma;local-gradient}
Let $u\in C^{2,1}_{\mathrm{loc}}$ be a positive  solution of (\ref{equ;1}) for $m>1$, and let
$(x,t)\in B_{R}(x_{0})\times(t_{0}-T/2,t_{0}]$. Suppose that $\mathrm{Ric}_{M}\geqslant-k$, $\sqrt{k}r\leqslant1/8$, and that for some $U>0$,
\begin{equation}\label{equ;22}
\frac{U}{4}\leqslant u\leqslant U
\quad\text{in}\quad
B_{r/2}(x)\times
\left(t-\frac14U^{1-m}r^{2},t\right].
\end{equation}
Then there exists a constant $C=C(m,n)$ such that
\begin{equation}\label{equ;23}
|\nabla u|(x,t)\leqslant C(m,n)\frac{u(x,t)}{r}.
\end{equation}
\end{lemma}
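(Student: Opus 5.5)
We reduce to a uniformly parabolic problem by intrinsic rescaling and then run a Hamilton/Souplet--Zhang type maximum-principle argument for the pressure. Set $\tilde u(y,\theta)=u(y,t+U^{1-m}\theta)/U$. Then $\tilde u_\theta=\Delta\tilde u^m$ and $1/4\leqslant\tilde u\leqslant1$ on $B_{r/2}(x)\times(-r^2/4,0]$. The estimate \eqref{equ;23} is invariant under this scaling, because $|\nabla u|/u=|\nabla\tilde u|/\tilde u$. It therefore suffices to treat $U=1$.

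Let $v=\frac{m}{m-1}u^{m-1}$ be the pressure. It satisfies
\begin{equation*}
v_\theta=(m-1)v\Delta v+|\nabla v|^2 .
\end{equation*}
Since $1/4\leqslant u\leqslant1$, we have $c_1(m)\leqslant v\leqslant c_2(m)$. The operator $\mathcal L=\partial_\theta-(m-1)v\Delta-2\langle\nabla v,\nabla\cdot\rangle$ is thus uniformly parabolic, with ellipticity constants depending only on $m$. Moreover $|\nabla u|/u=|\nabla v|/((m-1)v)$, so it suffices to bound $|\nabla v|(x,t)\leqslant C/r$.

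We follow Souplet--Zhang, as in \cite{Xu,HXZ}, but the pinching $c_1\leqslant v\leqslant c_2$ now holds with an \emph{absolute} ratio $4^{m-1}$. The plan is to consider
\begin{equation*}
F=\frac{|\nabla v|^2}{(a-v)^2}\quad\text{or}\quad F=\frac{|\nabla v|^2}{v^{\gamma}},
\end{equation*}
with $a=2c_2$ or a suitable power $\gamma$ chosen so that the Bochner formula closes. Bochner with $\mathrm{Ric}\geqslant-k$ gives
\begin{equation*}
\mathcal L|\nabla v|^2\leqslant -2(m-1)v|\nabla^2v|^2+C|\nabla v|^2\,|\nabla^2 v|+Ck|\nabla v|^2 .
\end{equation*}
The cross terms coming from $\nabla v\cdot\nabla\Delta v$ and from $(m-1)|\nabla v|^2\Delta v$ must be absorbed. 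The bad term of size $|\nabla v|^2\Delta v$ is controlled by Cauchy--Schwarz against the good Hessian term, together with the good term generated by the denominator. That good term has the form
\begin{equation*}
\frac{(m-1)v\,|\nabla v|^4}{(a-v)^4}\times\text{const},
\end{equation*}
and it is strictly positive because $a-v\geqslant c_2$ is comparable to $v$.

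The resulting inequality should be
\begin{equation*}
\mathcal L F\leqslant -\epsilon F^2+C(\text{lower-order terms})+CkF,
\end{equation*}
possibly after modifying the weight by adding $\lambda v^2$, as in Hamilton's trick. We then localize with a Li--Yau cutoff $\psi(d(y,x),\theta)$, supported in $B_{r/2}\times(-r^2/4,0]$, that satisfies the usual bounds $|\nabla\psi|^2/\psi\leqslant C/r^2$, $|\psi_\theta|\leqslant C/r^2$ and $-\Delta\psi\leqslant C(1+\sqrt kr)/r^2$. The Laplacian comparison uses $\mathrm{Ric}\geqslant-k$, and Calabi's trick handles the cut locus. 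At a maximum point of $\psi F$ we obtain
\begin{equation*}
\psi F\leqslant C\bigl(r^{-2}+k\bigr)\leqslant Cr^{-2},
\end{equation*}
since $\sqrt k r\leqslant1/8$. All constants here depend only on $m$ and $n$, because $v$ is pinched between $m$-dependent constants.

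The main obstacle is closing the Bochner computation for \emph{all} $m>1$. The known ranges, such as $m<1+1/\sqrt{n-1}$, come from the sign of the cross terms when no pinching is available. Here the extra freedom is the two-sided bound. I would first try the weight $(a-v)^{-2}$ with $a$ large relative to $c_2$, or equivalently $F=|\nabla v|^2e^{Nv}$ with $N=N(m,n)$ large, so that the convexity term $N^2|\nabla v|^4$ dominates every cross term $C(m,n)|\nabla v|^4$. The error terms are then bounded using $v\leqslant c_2$. Should this fail, a fallback is the linear route. The function $u^m$ solves the nondivergence equation
\begin{equation*}
w_\theta=mu^{m-1}\Delta w,
\end{equation*}
with a coefficient between $m4^{1-m}$ and $m$. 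I would combine interior Hölder estimates \cite[Corollary~5.5]{S-C} for $u$ with a Bernstein argument for the linear equation whose coefficient is frozen in Hölder norm. The first route is preferable because it only needs $C^{2,1}$ regularity at the point.
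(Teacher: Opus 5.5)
\textbf{There is a genuine gap: the Bochner step cannot be closed with only the pinching from \eqref{equ;22}, and both of your remedies make it worse.} Your setup (rescaling to $U=1$, passing to the pressure $v$, and a Calabi-localized maximum principle for a weighted $|\nabla v|^2$) matches the skeleton of the paper's proof. The failure is in the step you flag as the main obstacle.

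Take $L=\partial_\theta-(m-1)v\Delta$ and $F=|\nabla v|^2\psi(v)^{-2}$. At a critical point of $F$ one has $v_{1j}=0$ for $j\neq1$ and $v_{11}=\frac{\psi'}{\psi}|\nabla v|^2$, while the transverse entries $v_{ij}$, $i,j\geqslant2$, are unconstrained. Modulo the Ricci term $2(m-1)kvF$, this gives
\[
LF\leqslant 2\psi\Bigl[(m-1)v\psi''+m\psi'+\frac{(n-1)(m-1)}{4v}\psi\Bigr]F^{2},
\]
with equality for a suitable choice of the free entries. So the method needs the bracket to be negative on the whole range of $v$.

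\emph{The weight $(a-v)^{-2}$.} For $\psi=a-v$ the bracket is $-m+\frac{(n-1)(m-1)(a-v)}{4v}$. The $(m-1)vF^2$ contributions cancel, so the only good term is $2m(a-v)F^2$. It must beat $\frac{(n-1)(m-1)(a-v)^2}{2v}F^2$, which requires $(a-v)/v$ to be \emph{small}, not comparable to $v$. With $a=2c_2$ or larger, $(a-v)/v\geqslant1$, and it equals $2\cdot4^{m-1}-1$ at $v=c_1$. This fails for large $m$ in every dimension.

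\emph{The weight $e^{Nv}$.} For $F=|\nabla v|^2e^{Nv}$ the coefficient of $F^2$ is
\[
e^{-Nv}\Bigl[-mN+\tfrac12(m-1)vN^2+\tfrac{(n-1)(m-1)}{2v}\Bigr],
\]
which is positive for $|N|$ large. The cross term $-2(m-1)v\langle\nabla|\nabla v|^2,\nabla e^{Nv}\rangle$ and the $v_{11}^2$ term outweigh the convexity gain.

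\emph{No weight of this form works.} In $s=\log v$ the bracket becomes $v^{-1}\bigl[(m-1)\ddot\psi+\dot\psi+\frac{(n-1)(m-1)}{4}\psi\bigr]$. This constant-coefficient operator is oscillatory exactly when $(n-1)(m-1)^2>1$; the power weight $v^{1/(2(m-1))}|\nabla v|$ works precisely for $m<1+1/\sqrt{n-1}$, matching the range quoted in the introduction. By Sturm comparison, a positive $\psi$ with negative bracket exists only if $\log(\sup v/\inf v)<2\pi(m-1)/\sqrt{(n-1)(m-1)^2-1}$. But \eqref{equ;22} allows $\log(\sup v/\inf v)$ up to $(m-1)\log 4$, so the condition fails once $(n-1)(m-1)^2\geqslant1+(\pi/\log2)^2\approx21.5$. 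Adding $\lambda v^2$ only changes lower-order terms.

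\textbf{The missing idea is to improve the pinching before the Bochner step, using H\"older continuity.} The paper uses \eqref{equ;22} only to make the rescaled $u$ solve a uniformly parabolic divergence-form equation with coefficient in $[m4^{1-m},m]$. It then applies the local H\"older estimate \cite[Corollary~5.5]{S-C} and shrinks to $B_{\eta r}(x)\times(t-\eta^2r^2/v_*,t]$ with $\eta=\eta(m,n)$. On that cylinder $(1-\frac1{8n})v_*\leqslant v\leqslant(1+\frac1{8n})v_*$. It takes $\psi=B-v$ with $B=(1+\frac1{4n})v_*$, just above $\sup v$, so $(B-v)/v\leqslant 3/(8n-1)$ and the good term dominates for every $m>1$. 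The price is a drift coefficient $2(m-1)v/(B-v)=O(n(m-1))$ and the radius loss $\eta r$, both absorbed into $C(m,n)$.

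Your fallback cites the same H\"older estimate but pairs it with a Schauder/Bernstein argument for a nondivergence equation with merely H\"older coefficient. Such estimates are not available with constants depending only on $m,n$ under a mere Ricci lower bound. A Bernstein computation would also require $\nabla(u^{m-1})$, which is the quantity being estimated. The H\"older estimate should instead be used to make the oscillation of $v$ small relative to $v_*$.
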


\begin{proof}
Set
\begin{equation*}
a(y,\tau)=mu^{m-1}(y,\tau).
\end{equation*}
Then $u$ solves the equation
\begin{equation*}
u_{\tau}=\operatorname{div}(a(y,\tau)\nabla u).
\end{equation*}
Rescale the time variable by
\begin{equation*}
\theta=U^{m-1}(\tau-t),
\qquad
\widehat u(y,\theta)=u(y,t+U^{1-m}\theta),
\end{equation*}
and put
\begin{equation*}
\widehat a(y,\theta)=U^{1-m}a(y,t+U^{1-m}\theta).
\end{equation*}
Then
\begin{equation}\label{equ;24}
\widehat u_{\theta}
=\operatorname{div}(\widehat a(y,\theta)\nabla\widehat u)
\quad\text{in}\quad
B_{r/2}(x)\times\left(-\frac{1}{4}r^{2},0\right].
\end{equation}
Moreover, by (\ref{equ;22}),
\begin{equation*}
\frac{m}{4^{m-1}}\leqslant\widehat a(y,\theta)\leqslant m.
\end{equation*}
By the local H\"older continuity in Corollary 5.5 of \cite{S-C}, there
exist constants $C_{H}=C_{H}(m,n)$ and
$\alpha_{H}=\alpha_{H}(m,n)\in(0,1)$ such that
\begin{equation*}
|\widehat u(y,\theta)-\widehat u(x,0)|
\leqslant
C_{H}\|\widehat u\|_{\infty}
\left(\frac{d(x,y)+\sqrt{-\theta}}{r}\right)^{\alpha_{H}}
\end{equation*}
in $B_{r/4}(x)\times(-r^{2}/16,0]$.  Returning to the original time
variable gives
\begin{equation}\label{equ;25}
|u(y,\tau)-u(x,t)|
\leqslant
C_{H}U
\left(
\frac{d(x,y)+\sqrt{U^{m-1}(t-\tau)}}{r}
\right)^{\alpha_{H}}
\end{equation}
in
$B_{r/4}(x)\times(t-U^{1-m}r^{2}/16,t]$. For simplicity, denote
\begin{equation*}
u_{*}=u(x,t),
\qquad
v=\frac{m}{m-1}u^{m-1},
\qquad
v_{*}=v(x,t).
\end{equation*}
Since $U\leqslant4u_{*}$, we have
\begin{equation}\label{equ;26}
v_{*}=\frac{m}{m-1}u_{*}^{m-1},
\qquad
\frac{U^{m-1}}{v_{*}}
\leqslant\frac{m-1}{m}4^{m-1}.
\end{equation}
Choose $\eta=\eta(m,n)>0$ small enough such that
\begin{equation}\label{equ;27}
\begin{aligned}
&\eta\leqslant\frac14,
\qquad
\eta^{2}\frac{m-1}{m}4^{m-1}\leqslant\frac1{16},\\
&4C_{H}
\left[
\eta\left(1+\sqrt{\frac{m-1}{m}4^{m-1}}\right)
\right]^{\alpha_{H}}
\leqslant
\min\left\{
\begin{aligned}
&\frac12,
\quad 1-\left(1-\frac1{8n}\right)^{1/(m-1)},\\
&\left(1+\frac1{8n}\right)^{1/(m-1)}-1
\end{aligned}
\right\}.
\end{aligned}
\end{equation}
Then $B_{\eta r}(x)\subset B_{r/4}(x)$ and
\begin{equation*}
\frac{\eta^{2}r^{2}}{v_{*}}
\leqslant\frac1{16}U^{1-m}r^{2}.
\end{equation*}
Applying (\ref{equ;25}) in
\begin{equation*}
B_{\eta r}(x)\times
\left(t-\frac{\eta^{2}r^{2}}{v_{*}},t\right]
\end{equation*}
and using (\ref{equ;26})--(\ref{equ;27}), we obtain
\begin{align*}
|u(y,\tau)-u_{*}|
&\leqslant
C_{H}U
\left[
\eta\left(1+\sqrt{\frac{U^{m-1}}{v_{*}}}\right)
\right]^{\alpha_{H}}\\
&\leqslant
4C_{H}
\left[
\eta\left(1+\sqrt{\frac{m-1}{m}4^{m-1}}\right)
\right]^{\alpha_{H}}u_{*}.
\end{align*}
Consequently,
\begin{equation*}
\left(1-\frac1{8n}\right)^{1/(m-1)}
\leqslant\frac{u(y,\tau)}{u_{*}}
\leqslant
\left(1+\frac1{8n}\right)^{1/(m-1)},
\end{equation*}
or equivalently,
\begin{equation}\label{equ;28}
\left(1-\frac1{8n}\right)v_{*}
\leqslant v(y,\tau)
\leqslant
\left(1+\frac1{8n}\right)v_{*}
\end{equation}
throughout the same cylinder.

Now we introduce the linear operator
\begin{equation*}
L=\frac{\partial}{\partial t}-(m-1)v\Delta
\end{equation*}
and the auxiliary function
\begin{equation*}
F=\frac{|\nabla v|^{2}}{(B-v)^{2}},
\qquad
B=\left(1+\frac1{4n}\right)v_{*}.
\end{equation*}
By (\ref{equ;28}),
\begin{equation*}
\frac{v_{*}}{8n}\leqslant B-v\leqslant\frac{3v_{*}}{8n}.
\end{equation*}
A common computation yields
\begin{align*}
L(|\nabla v|^{2})
&=2(m-1)|\nabla v|^{2}\Delta v
-2(m-1)v|\nabla^{2}v|^{2}
+2\langle\nabla v,\nabla|\nabla v|^{2}\rangle\\
&\quad-2(m-1)v\mathrm{Ric}(\nabla v,\nabla v).
\end{align*}
Consequently,
\begin{align}
LF
&=\frac1{(B-v)^{2}}
\bigl[
2(m-1)|\nabla v|^{2}\Delta v
-2(m-1)v|\nabla^{2}v|^{2}
+2\langle\nabla v,\nabla|\nabla v|^{2}\rangle\notag\\*
&\qquad\qquad
-2(m-1)v\mathrm{Ric}(\nabla v,\nabla v)
\bigr]
+\frac{2|\nabla v|^{2}v_{t}}{(B-v)^{3}}\notag\\*
&\quad
-(m-1)v|\nabla v|^{2}
\left(
\frac{2\Delta v}{(B-v)^{3}}
+\frac{6|\nabla v|^{2}}{(B-v)^{4}}
\right)
-\frac{4(m-1)v}{(B-v)^{3}}
\langle\nabla|\nabla v|^{2},\nabla v\rangle\notag\\
&=2(m-1)F\Delta v
-\frac{2(m-1)v}{(B-v)^{2}}|\nabla^{2}v|^{2}
+2\left(1-\frac{2(m-1)v}{B-v}\right)
\langle\nabla v,\nabla F\rangle\notag\\*
&\quad
-\frac{2(m-1)v}{(B-v)^{2}}
\mathrm{Ric}(\nabla v,\nabla v)
+2(mv-B)F^{2}.\label{equ;29}
\end{align}
At a point where $|\nabla v|>0$, choose a local orthonormal frame
$\{e_{1},\ldots,e_{n}\}$ with
$e_{1}=\nabla v/|\nabla v|$.  Then $v_{1}=|\nabla v|$ and
$v_{j}=0$ for $j\neq1$.  Since
\begin{equation*}
\nabla F
=\frac{\nabla|\nabla v|^{2}}{(B-v)^{2}}
+\frac{2|\nabla v|^{2}\nabla v}{(B-v)^{3}},
\end{equation*}
we have
\begin{equation}\label{equ;30}
F_{1}=\frac{2|\nabla v|v_{11}}{(B-v)^{2}}
+\frac{2|\nabla v|^{3}}{(B-v)^{3}},
\qquad
F_{j}=\frac{2|\nabla v|v_{1j}}{(B-v)^{2}}
\quad(j\neq1).
\end{equation}
It follows that
\begin{equation}\label{equ;31}
\Delta v
=\frac{(B-v)^{2}}{2|\nabla v|}F_{1}
-\frac{|\nabla v|^{2}}{B-v}
+\sum_{i=2}^{n}v_{ii},
\end{equation}
and
\begin{equation}\label{equ;32}
\begin{aligned}
|\nabla^{2}v|^{2}
&=\frac{(B-v)^{4}}{4|\nabla v|^{2}}F_{1}^{2}
-(B-v)|\nabla v|F_{1}
+\frac{|\nabla v|^{4}}{(B-v)^{2}}\\
&\quad
+\frac{(B-v)^{4}}{2|\nabla v|^{2}}
\sum_{i=2}^{n}F_{i}^{2}
+\sum_{i,j=2}^{n}v_{ij}^{2}.
\end{aligned}
\end{equation}
Substituting (\ref{equ;31})--(\ref{equ;32}) into (\ref{equ;29}),
\begin{align*}
LF
&=\frac{(m-1)(B-v)^{2}}{|\nabla v|}F_{1}F
-\frac{2(m-1)|\nabla v|^{2}}{B-v}F
-\frac{(m-1)v(B-v)^{2}}{2|\nabla v|^{2}}F_{1}^{2}\\
&\quad
+\frac{2(m-1)v|\nabla v|}{B-v}F_{1}
-\frac{2(m-1)v|\nabla v|^{4}}{(B-v)^{4}}
+2\left(1-\frac{2(m-1)v}{B-v}\right)
\langle\nabla v,\nabla F\rangle\\
&\quad
-\frac{2(m-1)v}{(B-v)^{2}}
\mathrm{Ric}(\nabla v,\nabla v)
+2(mv-B)F^{2}
-\frac{(m-1)v(B-v)^{2}}{|\nabla v|^{2}}
\sum_{i=2}^{n}F_{i}^{2}\\
&\quad
+\left[
2(m-1)F\sum_{i=2}^{n}v_{ii}
-\frac{2(m-1)v}{(B-v)^{2}}
\sum_{i,j=2}^{n}v_{ij}^{2}
\right].
\end{align*}
The last bracket is bounded above by
\begin{align*}
&2(m-1)F\sum_{i=2}^{n}v_{ii}
-\frac{2(m-1)v}{(B-v)^{2}}
\sum_{i,j=2}^{n}v_{ij}^{2}\\
&\qquad\leqslant 2(m-1)\sum_{i=2}^{n}
\left(Fv_{ii}-\frac{v}{(B-v)^{2}}v_{ii}^{2}\right)\leqslant 
\frac{(m-1)(n-1)(B-v)^{2}}{2v}F^{2}.
\end{align*}
Dropping the remaining nonpositive square terms, we obtain
\begin{align*}
LF
&\leqslant
-\left[
2m(B-v)-\frac{(m-1)(n-1)(B-v)^{2}}{2v}
\right]F^{2}\\
&\quad
+\left[m+1-\frac{2(m-1)v}{B-v}\right]
\langle\nabla v,\nabla F\rangle
-\frac{2(m-1)v}{(B-v)^{2}}
\mathrm{Ric}(\nabla v,\nabla v).
\end{align*}
Moreover, (\ref{equ;28}) implies
\begin{equation*}
\frac{(m-1)(n-1)(B-v)^{2}}{2v}
\leqslant
\frac{3(m-1)(n-1)}{2(8n-1)}(B-v)
<m(B-v).
\end{equation*}
Therefore
\begin{equation*}
-LF
\geqslant
\frac{m}{8n}v_{*}F^{2}
+\left[
\frac{2(m-1)v}{B-v}-(m+1)
\right]\langle\nabla v,\nabla F\rangle
-2(m-1)kvF.
\end{equation*}

Choose a space-time cut-off function $\varphi$ such that
\begin{equation*}
\operatorname{Supp}\varphi
\subset
B_{\eta r}(x)\times
\left(t-\frac{\eta^{2}r^{2}}{v_{*}},t\right],
\end{equation*}
\begin{equation*}
\varphi\equiv1
\quad\text{on}\quad
B_{\eta r/2}(x)\times
\left(t-\frac{\eta^{2}r^{2}}{2v_{*}},t\right],
\qquad
0\leqslant\varphi\leqslant1,
\end{equation*}
and
\begin{equation*}
\frac{|\nabla\varphi|}{\varphi^{1/2}}
\leqslant\frac{C(m,n)}{r},
\qquad
|\varphi_{t}|\leqslant\frac{C(m,n)v_{*}}{r^{2}},
\qquad
\Delta\varphi
\geqslant-C(m,n)\left(k+\frac1{r^{2}}\right)
\geqslant-\frac{C(m,n)}{r^{2}}.
\end{equation*}
from the Laplace comparison theorem. We next calculate at a positive maximum point of $\varphi F$ and assume that this point is not in the cut locus, otherwise we apply the Calabi trick.  At this point, $\varphi\nabla F=-F\nabla\varphi$, and the preceding differential
inequality yields
\begin{align*}
0
&\geqslant-L(\varphi F)\\
&\geqslant \frac{m}{8n}v_{*}\varphi F^{2}
-\left[
\frac{2(m-1)v}{B-v}-(m+1)
\right]F\left<\nabla v, \nabla\varphi\right>
-2(m-1)kv\varphi F\\
&\quad -\varphi_{t}F+(m-1)vF\Delta \varphi+2(m-1)v\left<\nabla \varphi, \nabla F\right>\\
&\geqslant
\frac{m}{8n}v_{*}\varphi F^{2}
-\left[
\frac{2(m-1)v}{B-v}+(m+1)
\right]F|\nabla v||\nabla\varphi|
-2(m-1)kv\varphi F\\
&\quad
-\frac{C(m,n)}{r^{2}}v_{*}F
-\frac{C(m,n)}{r^{2}}vF.
\end{align*}
The second term satisfies
\begin{align*}
&-\left[
\frac{2(m-1)v}{B-v}+(m+1)
\right]F|\nabla v||\nabla\varphi|\\
&\quad\geqslant
-\bigl[2(m-1)v+(m+1)(B-v)\bigr]
F^{3/2}\frac{C(m,n)}{r}\varphi^{1/2}\\
&\quad\geqslant
-\frac{C(m,n)}{r}\sqrt{\varphi F}\,v_{*}F.
\end{align*}
Using $kr^{2}\leqslant1/64$, $v\leqslant(1+1/(8n))v_{*}$, and
$0\leqslant\varphi\leqslant1$, we conclude that
\begin{equation*}
0\geqslant
\frac{m}{8n}v_{*}\varphi F^{2}
-\frac{C(m,n)}{r}\sqrt{\varphi F}\,v_{*}F
-\frac{C(m,n)}{r^{2}}v_{*}F.
\end{equation*}
Dividing by $v_{*}F$ gives
\begin{equation*}
\frac{m}{8n}\varphi F
-\frac{C(m,n)}{r}\sqrt{\varphi F}
-\frac{C(m,n)}{r^{2}}\leqslant0,
\end{equation*}
and hence
\begin{equation*}
\varphi F\leqslant\frac{C(m,n)}{r^{2}}.
\end{equation*}
In particular, at $(x,t)$,
\begin{equation*}
\varphi F
=\frac{|\nabla v|^{2}}{(B-v_{*})^{2}}
\geqslant\frac{64n^{2}|\nabla v|^{2}}{9v_{*}^{2}}.
\end{equation*}
Thus $|\nabla v|(x,t)\leqslant C(m,n)v_{*}/r$, which is equivalent to
(\ref{equ;23}).
\end{proof}

For the FDE case, we prove the following result:
\begin{lemma}\label{lemma;local-gradient2}
Let $u\in C^{2,1}_{\mathrm{loc}}$ be a positive  solution of (\ref{equ;1}) for $0<m<1$, and let
$(x,t)\in B_{R}(x_{0})\times(t_{0}-T/2,t_{0}]$.  Suppose that $\mathrm{Ric}_{M}\geqslant-k$, $\sqrt{k}r\leqslant1/8$, and that for some $U>0$,
\begin{equation}\label{equ;49}
\frac{U}{4}\leqslant u\leqslant U
\quad\text{in}\quad
B_{r/2}(x)\times
\left(t-\frac14U^{1-m}r^{2},t\right].
\end{equation}
Then there exists a constant $C=C(m,n)$ such that
\begin{equation}\label{equ;50}
|\nabla u|(x,t)\leqslant C(m,n)\frac{u(x,t)}{r}.
\end{equation}
\end{lemma}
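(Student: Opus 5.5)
We follow the proof of Lemma~\ref{lemma;local-gradient}, adapting it to $0<m<1$. First rescale time by $\theta=U^{m-1}(\tau-t)$. The coefficient $\widehat a=U^{1-m}mu^{m-1}$ then satisfies $m\leqslant\widehat a\leqslant m4^{1-m}$ on $B_{r/2}(x)\times(-r^2/4,0]$, by \eqref{equ;49}. Corollary~5.5 of \cite{S-C} gives the H\"older bound \eqref{equ;25} unchanged on $B_{r/4}(x)\times(t-U^{1-m}r^2/16,t]$.

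Since $m-1<0$, we work with the positive inverse pressure
\begin{equation*}
v=\frac{m}{1-m}u^{m-1},\qquad v_*=v(x,t).
\end{equation*}
It satisfies $v_t=(1-m)v\Delta v-|\nabla v|^2$, obtained from the PME identity $v_t=(m-1)v\Delta v+|\nabla v|^2$ by $v\mapsto -v$. Because $u_*\leqslant U$, we have $U^{m-1}\leqslant\frac{1-m}{m}v_*$. Choose $\eta=\eta(m,n)$ as in \eqref{equ;27}, with $\frac{m-1}{m}4^{m-1}$ replaced by $\frac{1-m}{m}$ and exponent $1/(1-m)$. Then $\eta^2r^2/v_*\leqslant U^{1-m}r^2/16$, and on $B_{\eta r}(x)\times(t-\eta^2r^2/v_*,t]$ we obtain the pinching
\begin{equation*}
\left(1-\tfrac1{8n}\right)v_*\leqslant v\leqslant\left(1+\tfrac1{8n}\right)v_*.
\end{equation*}

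Next we apply the maximum principle with $L=\partial_t-(1-m)v\Delta$. The right auxiliary function has to be found here. Rewrite the FDE equation for $v$ as $(-v)_t=(m-1)(-v)\Delta(-v)+|\nabla(-v)|^2$; this is formally the PME computation \eqref{equ;29} with $v$ negative. Accordingly, take $F=|\nabla v|^2/(v-B)^2$ with $B=(1-\frac1{4n})v_*$, i.e.\ place the constant below the local pressure values, as announced in the introduction. Then $v_*/(8n)\leqslant v-B\leqslant 3v_*/(8n)$.

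Redo the computation of $LF$ in the frame $e_1=\nabla v/|\nabla v|$, using \eqref{equ;30}--\eqref{equ;32} with $B-v$ replaced by $v-B$. The quadratic term should come out with coefficient of the form
\begin{equation*}
-\Big[c_1(v-B)-\frac{(1-m)(n-1)(v-B)^2}{2v}\Big]F^2 .
\end{equation*}
The pinching makes the error part at most $\frac{3(1-m)(n-1)}{2(8n-1)}(v-B)$, which is small. So we expect
\begin{equation*}
-LF\geqslant c(m,n)v_*F^2+b\,\langle\nabla v,\nabla F\rangle-Ckv_*F,
\qquad |b|\leqslant C(m,n).
\end{equation*}
This is the main obstacle. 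The sign of the principal coefficient (the PME analogue was $2(mv-B)$, coming from the $v_t$ and $|\nabla v|^4$ terms) must be checked carefully. If it fails, I would instead try $F=|\nabla v|^2/(B'-v)^2$ with $B'$ slightly above $v$, and keep whichever choice makes the leading $F^2$ coefficient negative. The free Hessian square term $-(1-m)v|\nabla^2v|^2/(v-B)^2$ absorbs the cross terms through Cauchy--Schwarz in the same way.

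The last step is the localization. Take the same cutoff $\varphi$ supported in $B_{\eta r}(x)\times(t-\eta^2r^2/v_*,t]$, with the Laplacian comparison bound and the Calabi trick. At a positive maximum of $\varphi F$ we get
\begin{equation*}
c\,v_*\varphi F^2\leqslant \frac{C}{r}\sqrt{\varphi F}\,v_*F+\frac{C}{r^2}v_*F ,
\end{equation*}
using $kr^2\leqslant1/64$. Hence $\varphi F\leqslant C/r^2$. At $(x,t)$ this gives $|\nabla v|\leqslant Cv_*/r$. Since $\nabla v=-m u^{m-2}\nabla u$, this is equivalent to \eqref{equ;50}.
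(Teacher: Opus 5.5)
Your plan follows the paper's approach: the inverse pressure $v=\frac{m}{1-m}u^{m-1}$, $L=\partial_t-(1-m)v\Delta$, $F=|\nabla v|^2/(v-B)^2$ with $B$ below $v$, and the same H\"older step and localization. However, the step you flagged fails as you set it up. The problem is the size of the leading coefficient, not its sign. Carrying out the computation gives $c_1=2m$:
\begin{equation*}
LF\leqslant-\Big[2m(v-B)-\frac{(1-m)(n-1)(v-B)^2}{2v}\Big]F^2+\Big[\frac{2(1-m)v}{v-B}-(1+m)\Big]\langle\nabla v,\nabla F\rangle-\frac{2(1-m)v}{(v-B)^2}\mathrm{Ric}(\nabla v,\nabla v).
\end{equation*}
The good term is proportional to $m$, while the Hessian error is proportional to $1-m$. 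You chose the pinching $(1\pm\frac1{8n})v_*$ and $B=(1-\frac1{4n})v_*$, independent of $m$. With these, $(v-B)/v$ can be as large as $\frac{3}{8n+1}$. The bracket is then $\leqslant0$ as soon as $m\leqslant\frac{3(n-1)}{35n+1}$, in particular for all $m\leqslant 3/71$ and every $n\geqslant2$. Your bound $\frac{3(1-m)(n-1)}{2(8n-1)}(v-B)$ is small relative to $v-B$, but not relative to $2m(v-B)$. So no $c(m,n)>0$ exists, and the maximum-principle inequality yields nothing for small $m$. In the PME case the same $\frac1{8n}$ works only because there one compares $m-1$ with $m$. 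Your fallback $F=|\nabla v|^2/(B'-v)^2$ with $B'>v$ does not help: the same computation gives the leading term $+2m(B'-v)F^2$, which has the wrong sign.

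The missing idea is to make the pinching width depend on $m$. The paper takes $\varepsilon=\min\{\frac18,\frac{m}{8(1-m)n}\}$. It then chooses $\eta=\eta(m,n)$ from the H\"older bound so that $(1-\varepsilon)v_*\leqslant v\leqslant(1+\varepsilon)v_*$ on $B_{\eta r}(x)\times(t-\eta^2r^2/v_*,t]$. Finally it sets $B=(1-2\varepsilon)v_*$, so that $\varepsilon v_*\leqslant v-B\leqslant3\varepsilon v_*$. Then $\frac{(1-m)(n-1)(v-B)}{2v}\leqslant\frac{3m(n-1)}{14n}<m$, which gives $-LF\geqslant m\varepsilon v_*F^2+\big[1+m-\frac{2(1-m)v}{v-B}\big]\langle\nabla v,\nabla F\rangle-2(1-m)kvF$. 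The gradient coefficient is now $O(1/\varepsilon)$, but it multiplies $|\nabla v|=(v-B)\sqrt F=O(\varepsilon v_*)\sqrt F$. With this change, your localization step goes through as you wrote it.
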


\begin{proof}
The linear H\"older step is the same as in the proof of Lemma \ref{lemma;local-gradient}, except for the ellipticity interval. With $a(y,\tau)=mu^{m-1}(y,\tau)$, the time change $\theta=U^{m-1}(\tau-t)$ gives
\begin{equation*}
 \widehat u_\theta=\operatorname{div}(\widehat a (y,\theta)\nabla\widehat u),
 \qquad m\leqslant\widehat a\leqslant m4^{1-m}.
\end{equation*}
By the local H\"older continuity, there exist $C_H=C_H(m,n)$ and $\alpha_H=\alpha_H(m,n)\in(0,1)$ such that
\begin{equation}\label{equ;51}
 |u(y,\tau)-u(x,t)|\leqslant C_HU
 \left(\frac{d(x,y)+\sqrt{U^{m-1}(t-\tau)}}r\right)^{\alpha_H}
\end{equation}
in $B_{r/4}(x)\times(t-U^{1-m}r^2/16,t]$. For simplicity, denote 
\begin{equation}\label{equ;52}
 u_*=u(x,t),\qquad
 v=\frac m{1-m}u^{m-1},\qquad v_*=v(x,t).
\end{equation}
In this case, we have
\begin{equation}\label{equ;53}
 v_t=(1-m)v\Delta v-|\nabla v|^2,
 \qquad
 \frac{U^{m-1}}{v_*}
 =\frac{1-m}{m}\left(\frac U{u_*}\right)^{m-1}
 \leqslant\frac{1-m}{m}.
\end{equation}
Choose
\begin{equation}\label{equ;54}
 \varepsilon=\min\left\{\frac18,\frac{m}{8(1-m)n}\right\}>0,
\end{equation}
and then $\eta=\eta(m,n)>0$  small enough such that
\begin{equation}\label{equ;55}
\begin{aligned}
 &\eta\leqslant\frac18,\qquad \eta^2\frac{1-m}{m}\leqslant\frac1{16},\\
 &4C_H\left[\eta\left(1+\sqrt{\frac{1-m}{m}}\right)\right]^{\alpha_H}
 \leqslant\min\left\{\frac12,
       1-(1+\varepsilon)^{1/(m-1)},
       (1-\varepsilon)^{1/(m-1)}-1\right\}.
\end{aligned}
\end{equation}
The minimum on the right is strictly positive. Since $U\leqslant4u_*$, \eqref{equ;51}--\eqref{equ;55} imply
\begin{equation*}
 (1+\varepsilon)^{1/(m-1)}
 \leqslant\frac{u(y,\tau)}{u_*}
 \leqslant(1-\varepsilon)^{1/(m-1)}
\end{equation*}
on $B_{\eta r}(x)\times(t-\eta^2r^2/v_*,t]$. Raising to the negative power $m-1$ reverses the inequalities, and gives 
\begin{equation}\label{equ;56}
 (1-\varepsilon)v_*\leqslant v(y,\tau)\leqslant(1+\varepsilon)v_*
\end{equation}
throughout the same cylinder.

 Introduce the linear operator $L=\partial_t-(1-m)v\Delta $ and the auxiliary function 
 \begin{equation*}
    F=\frac{|\nabla v|^2}{(v-B)^2},\qquad  B=(1-2\varepsilon)v_*.
 \end{equation*}
 By \eqref{equ;56}, 
 \begin{equation}\label{equ;57}
 \varepsilon v_*\leqslant v-B\leqslant3\varepsilon v_*.
\end{equation}
A common computation yields
\begin{align*}
 L(|\nabla v|^2)={}&2(1-m)|\nabla v|^2\Delta v
       -2(1-m)v|\nabla^2v|^2\\
 &-2\langle\nabla v,\nabla|\nabla v|^2\rangle
       -2(1-m)v\Ric(\nabla v,\nabla v).
\end{align*}
The corresponding calculation to that in the PME proof implies
\begin{equation}\label{equ;58}
\begin{aligned}
 LF={}&2(1-m)F\Delta v
       -\frac{2(1-m)v}{(v-B)^2}|\nabla^2v|^2\\
 &+2\left(-1+\frac{2(1-m)v}{v-B}\right)
       \langle\nabla v,\nabla F\rangle
       -\frac{2(1-m)v}{(v-B)^2}\Ric(\nabla v,\nabla v)\\
 &+2(B-mv)F^2.
\end{aligned}
\end{equation}
At a point where $|\nabla v|>0$, choose a local orthonormal frame
$\{e_{1},\ldots,e_{n}\}$ with
$e_{1}=\nabla v/|\nabla v|$.  Then $v_{1}=|\nabla v|$ and
$v_{j}=0$ for $j\neq1$.  Then we have
\begin{equation}\label{equ;59}
F_{1}=\frac{2|\nabla v|v_{11}}{(v-B)^{2}}
-\frac{2|\nabla v|^{3}}{(v-B)^{3}},
\qquad
F_{j}=\frac{2|\nabla v|v_{1j}}{(v-B)^{2}}
\quad(j\neq1).
\end{equation}
It follows that
\begin{equation}\label{equ;60}
\Delta v
=\frac{(v-B)^{2}}{2|\nabla v|}F_{1}
+\frac{|\nabla v|^{2}}{v-B}
+\sum_{i=2}^{n}v_{ii},
\end{equation}
and
\begin{equation}\label{equ;61}
\begin{aligned}
|\nabla^{2}v|^{2}
&=\frac{(v-B)^{4}}{4|\nabla v|^{2}}F_{1}^{2}
+(v-B)|\nabla v|F_{1}
+\frac{|\nabla v|^{4}}{(v-B)^{2}}\\
&\quad
+\frac{(v-B)^{4}}{2|\nabla v|^{2}}
\sum_{i=2}^{n}F_{i}^{2}
+\sum_{i,j=2}^{n}v_{ij}^{2}.
\end{aligned}
\end{equation}
Substituting \eqref{equ;60}-\eqref{equ;61} into \eqref{equ;58} and dropping the remaining nonpositive square terms, we obtain
\begin{equation}\label{equ;62}
\begin{aligned}
 LF\leqslant{}&-\left[2m(v-B)-\frac{(1-m)(n-1)(v-B)^2}{2v}\right]F^2\\
 &+\left[-(1+m)+\frac{2(1-m)v}{v-B}\right]
       \langle\nabla v,\nabla F\rangle
       -\frac{2(1-m)v}{(v-B)^2}\Ric(\nabla v,\nabla v).
\end{aligned}
\end{equation}
Here we have used
\begin{equation*}
 2(1-m)F\sum_{i=2}^nv_{ii}
       -\frac{2(1-m)v}{(v-B)^2}\sum_{i,j=2}^nv_{ij}^2\leqslant
       \frac{(1-m)(n-1)(v-B)^2}{2v}F^2.
\end{equation*}
The choice of $\varepsilon$ (see \eqref{equ;54}) ensures 
\begin{equation*}
 \frac{(1-m)(n-1)(v-B)}{2v}
 \leqslant\frac{3(1-m)(n-1)\varepsilon}{2(1-\varepsilon)}
 \leqslant\frac{3m(n-1)}{14n}<m.
\end{equation*}
Consequently,
\begin{equation}\label{equ;63}
 2m(v-B)-\frac{(1-m)(n-1)(v-B)^2}{2v}
 \geqslant m(v-B)\geqslant m\varepsilon v_*>0.
\end{equation}
Therefore
\begin{equation}\label{equ;64}
\begin{aligned}
 -LF\geqslant{}&m\varepsilon v_*F^2
 +\left[1+m-\frac{2(1-m)v}{v-B}\right]
        \langle\nabla v,\nabla F\rangle
 -2(1-m)kvF.
\end{aligned}
\end{equation}
Then we follow the remaining steps in the proof of Lemma \ref{lemma;local-gradient} and thus conclude \eqref{equ;50}.
\end{proof}


\section{Proof of Theorem \ref{thm;main} and Theorem \ref{thm;main2}}

\begin{proof}[Proof of Theorem \ref{thm;main}]  Fix any
$(x,t)\in B_{R}(x_{0})\times(t_{0}-T/2,t_{0}]$
and write $u_{*}=u(x,t)>0$.  Define
\begin{equation}\label{equ;68}
\rho=\min\left\{
\frac{R}{4},
\frac14\sqrt{A^{m-1}T},
\frac{1}{8\sqrt{k}}
\right\},
\end{equation}
where the last entry is understood as $+\infty$ when $k=0$.  Then
\begin{equation}\label{equ;69}
B_{2\rho}(x)\times(t-4A^{1-m}\rho^{2},t]
\subset
B_{2R}(x_{0})\times(t_{0}-T,t_{0}],
\qquad
\sqrt{k}\rho\leqslant\frac18.
\end{equation}

Let $\nu$ and $\sigma$ be the constants in Lemma
\ref{lemma;alternative}, and choose
\begin{equation}\label{equ;70}
\beta
=\max\left\{
m,
\frac{m-1}{2}
+\frac{\log(4/\sqrt{\nu})}{\log(1/\sigma)}
\right\}.
\end{equation}
Then
\begin{equation*}
\sigma^{\beta-(m-1)/2}
\leqslant
\sigma^{\log(4/\sqrt{\nu})/\log(1/\sigma)}
=\frac{\sqrt{\nu}}{4}.
\end{equation*}
For $j=0,1,2,\ldots$, define
\begin{equation*}
U_{j}=\sigma^{j}A,
\qquad
\rho_{j}=\sigma^{\beta j}\rho,
\qquad
Q_{j}=B_{2\rho_{j}}(x)
\times(t-4U_{j}^{1-m}\rho_{j}^{2},t].
\end{equation*}
By (\ref{equ;69}), $0<u\leqslant U_{0}=A$ on $Q_{0}$.  Suppose that
$0<u\leqslant U_{j}$ on $Q_{j}$.  Applying Lemma
\ref{lemma;alternative} with $U=U_{j}$ and $r=\rho_{j}$, one of its two
alternatives occurs.  If alternative (i) occurs, then by the continuity of $u$ 
\begin{equation*}
\frac{U_{j}}{4}\leqslant u\leqslant U_{j}
\quad\text{in}\quad
B_{\rho_{j}/2}(x)\times
\left(t-\frac14U_{j}^{1-m}\rho_{j}^{2},t\right],
\end{equation*}
and Lemma \ref{lemma;local-gradient} yields
\begin{equation*}
|\nabla u|(x,t)
\leqslant C(m,n)\frac{u(x,t)}{\rho_{j}}.
\end{equation*}
If alternative (ii) occurs, then
\begin{equation*}
0<u\leqslant\sigma U_{j}=U_{j+1}
\quad\text{in}\quad
B_{\rho_{j}/2}(x)\times
\left(t-\frac{\nu}{4}U_{j}^{1-m}\rho_{j}^{2},t\right].
\end{equation*}
The choice of $\beta$ implies
\begin{equation*}
2\rho_{j+1}=2\sigma^{\beta}\rho_{j}\leqslant \frac{\sqrt{\nu}}{2}\sigma^{(m-1)/2}\rho_{j}\leqslant\frac{\rho_{j}}{2},
\end{equation*}
and
\begin{equation*}
4U_{j+1}^{1-m}\rho_{j+1}^{2}
=4\sigma^{2\beta+1-m}U_{j}^{1-m}\rho_{j}^{2}
\leqslant\frac{\nu}{4}U_{j}^{1-m}\rho_{j}^{2}.
\end{equation*}
Therefore $Q_{j+1}$ is contained in the preceding cylinder, and the
iteration continues.

Alternative (ii) cannot occur for every $j$, since otherwise
\begin{equation*}
0<u_{*}\leqslant U_{j}=\sigma^{j}A\longrightarrow0.
\end{equation*}
Hence alternative (i) occurs for some finite $J$.  For this $J$,
$u_{*}\leqslant U_{J}=\sigma^{J}A$, and consequently
\begin{equation*}
\rho_{J}=\sigma^{\beta J}\rho
\geqslant\rho\left(\frac{u_{*}}{A}\right)^{\beta}.
\end{equation*}
It follows that
\begin{align*}
|\nabla u^{\beta}|(x,t)
&=\beta u_{*}^{\beta-1}|\nabla u|(x,t)\\
&\leqslant C(m,n)\frac{u_{*}^{\beta}}{\rho_{J}}
\leqslant C(m,n)\frac{A^{\beta}}{\rho}.
\end{align*}
Finally,
\begin{equation*}
\frac1{\rho}
=\max\left\{
\frac4R,
\frac{4A^{-(m-1)/2}}{\sqrt T},
8\sqrt{k}
\right\}
\leqslant
\frac4R+\frac{4A^{-(m-1)/2}}{\sqrt T}+8\sqrt{k}.
\end{equation*}
Since $(x,t)$ was arbitrary, (\ref{equ;2}) follows.
\end{proof}

\begin{proof}[Proof of Theorem \ref{thm;main2}] Fix any
$(x,t)\in B_{R}(x_{0})\times(t_{0}-T/2,t_{0}]$
and write $u_{*}=u(x,t)>0$.  Define, exactly as in the proof of Theorem \ref{thm;main},
\begin{equation}\label{equ;65}
\rho=\min\left\{
\frac{R}{4},
\frac14\sqrt{A^{m-1}T},
\frac{1}{8\sqrt{k}}
\right\},
\end{equation}
where the last entry is understood as $+\infty$ when $k=0$.  Then
\begin{equation}\label{equ;66}
B_{2\rho}(x)\times(t-4A^{1-m}\rho^{2},t]
\subset
B_{2R}(x_{0})\times(t_{0}-T,t_{0}],
\qquad
\sqrt{k}\rho\leqslant\frac18.
\end{equation}

Let $\nu$ and $\sigma$ be the constants in Lemma \ref{lemma;alternative2}. The same formula for the exponent is sufficient:
\begin{equation}\label{equ;67}
 \beta=\max\left\{m,\frac{m-1}{2}
            +\frac{\log(4/\sqrt\nu)}{\log(1/\sigma)}\right\}.
\end{equation}
The remaining steps of the proof of Theorem \ref{thm;main} can then be repeated verbatim, and \eqref{equ;71} holds in $B_{R}(x_{0})\times(t_{0}-T/2,t_{0}]$. 
\end{proof}

\medskip

\vspace{.1in}
\textbf{Acknowledgements} The first author is supported by NSFC No. 12671074, 12531002, 12271039.

\vspace{.1in}
\textbf{Conflict of Interest} The authors have no conflict of interest to declare.

\vspace{.1in}
\textbf{Data availability} The authors declare no datasets were generated or analysed during the current study.

\vspace{.1in}
\textbf{AI assistance statement}  During the preparation of this work the authors used ChatGPT 5.6 in order to conduct literature searches and summaries, polish the language of the manuscript, review the proofs and assist with calculations for Lemma \ref{lemma;local-gradient} based on explicit computational paths defined by the authors. After using this tool, the authors reviewed and edited the content as needed and take full responsibility for the content of the publication.

\end{document}